\documentclass[11pt]{article}
\usepackage{amsmath,amsthm,amssymb}
\usepackage[all]{xy}

\usepackage{color}
\definecolor{citation}{rgb}{0,.40,.80}
\usepackage[colorlinks,linkcolor=citation,citecolor=citation,urlcolor=citation]{hyperref}

\title{Rational points and derived equivalence}

\author{Nicolas Addington,
Benjamin Antieau, \\
Sarah Frei, and
Katrina Honigs}

\date{}

\newcommand \C {\mathbb C}
\newcommand \cC {\mathcal C}
\newcommand \F {\mathbb F}
\newcommand \G {\mathbb G}
\renewcommand \L {\mathcal L}
\newcommand \M {\mathcal M}
\renewcommand \O {\mathcal O}
\renewcommand \P {\mathbb P}
\newcommand \Q {\mathbb Q}
\newcommand \R {\mathbb R}
\newcommand \W {\mathbb W}
\newcommand \X {\mathcal X}
\newcommand \Z {\mathbb Z}

\DeclareMathOperator \Br {Br}
\DeclareMathOperator \Pic {Pic}
\DeclareMathOperator \NS {NS}
\newcommand \Picbar {\overline{\mathrm{Pic}}{}} 
\DeclareMathOperator \Spec {Spec}
\DeclareMathOperator \Ext {Ext}

\newtheorem {thm} {Theorem}
\newtheorem {lem} [thm] {Lemma}
\newtheorem {prop} [thm] {Proposition}
\theoremstyle{definition}
\newtheorem* {question*} {Question}
\newtheorem {defprop} [thm] {Definition--Proposition}

\hyphenpenalty = 1000

\begin{document}

\maketitle

\begin{abstract}
\noindent
We give the first examples of derived equivalences between varieties defined over non-closed fields where one has a rational point and the other does not.  We begin with torsors over Jacobians of curves over $\Q$ and $\F_q(t)$, and conclude with a pair of hyperk\"ahler 4-folds over $\Q$.  The latter is independently interesting as a new example of a transcendental Brauer--Manin obstruction to the Hasse principle.
\end{abstract}

\section{Introduction}

The following question was posed by Esnault and stated in \cite[Question~2]{ab}:

\begin{question*}
For smooth projective varieties defined over a non--algebraically closed field $k$, is the existence of a $k$-rational point preserved under derived equivalence?
\end{question*}

Honigs et al.\ \cite{honigs1,honigs2} showed that over a finite field, the answer is yes up to dimension 3, and for Abelian varieties of any dimension; in fact the \emph{number} of rational points is preserved in these cases.  A conjecture of Orlov \cite[Conj.~1]{orlov_motives} would imply that this continues to hold in higher dimensions.

Antieau, Krashen, and Ward \cite[Thm.~1.1]{akw} showed that the answer is yes for curves of genus 1 over arbitrary fields.  (For curves of genus $g \ge 2$ or $g = 0$, the canonical bundle is ample or anti-ample, so there are no interesting derived equivalences.)

Hassett and Tschinkel \cite{ht_rational} studied the question for K3 surfaces, giving a positive answer over $\R$ and proving a number of suggestive results over local fields.  They showed that the \emph{index} of a K3 surface -- that is, the greatest common divisor of the degrees of its closed points -- is invariant under derived equivalence.  Thus if $D^b(X) \cong D^b(Y)$ and $X$ has a rational point then $Y$ has a zero-cycle of degree 1, so it seems very difficult to find a counterexample among K3 surfaces.  On the other hand, they asked whether the existence of a rational point is preserved under \emph{twisted} derived equivalence, and Ascher, Dasaratha, Perry, and Zhou \cite{adpz} showed that it is not, producing pairs of K3 surfaces over $\Q$, $\Q_2$, and $\R$ such that $D^b(X,\alpha) \cong D^b(Y,\beta)$ for some $\alpha \in \Br(X)$ and $\beta \in \Br(Y)$, where $X$ has a rational point (at which the restriction of $\alpha$ is trivial) and $Y$ has no rational points.

Auel and Bernardara studied geometrically rational surfaces in \cite{ab}, showing (among other things) that a del Pezzo surface $S$ of degree at least 5 has a rational point if and only if $D^b(S)$ admits a full exceptional collection.
\bigskip

We give two classes of examples to show that the answer to the question above is no: first, some Abelian varieties and torsors over them, and second, a pair of hyperk\"ahler 4-folds.

\begin{thm} \label{first_thm}
For every $g \ge 2$, there is an Abelian $g$-fold $X$ defined over $\Q$, an $X$-torsor $Y$ with no rational points, and a $\Q$-linear exact equivalence
\[ D^b(X) \cong D^b(Y). \]
The same holds over $\F_q(t)$ for any odd $q$.
\end{thm}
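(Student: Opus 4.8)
The plan is to realize the pointless torsor $Y$ as a degree-shifted Picard scheme of a carefully chosen curve, exploiting the fact that its natural theta divisor is defined over the base field even in the absence of a rational point. Concretely, I would take $C$ to be a smooth projective curve of genus $g$ over $k$ (with $k=\Q$ or $k=\F_q(t)$), set $X = J := \Pic^0(C)$, its Jacobian, and set $Y := \Pic^{g-1}(C)$. This $Y$ is a torsor under $J$; it acquires a $k$-rational point exactly when $C$ carries a $k$-rational divisor class of degree $g-1$, and — crucially — it carries a canonical, $k$-rational principal polarization given by the theta divisor $\Theta = W_{g-1} = \operatorname{image}(\mathrm{Sym}^{g-1} C \to \Pic^{g-1} C)$. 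Since $\Theta$ is the image of a $k$-morphism, $\O(\Theta)$ is an honest line bundle over $k$ even when $Y$ has no rational point, and this is the feature that drives the construction.

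With this in hand I would produce the equivalence by an explicit Fourier--Mukai kernel requiring no rational point. Let $m\colon Y\times J\to Y$ be the action of $J$ on its torsor and $pr_1\colon Y\times J\to Y$ the projection, both defined over $k$, and consider
\[
\mathcal K \;=\; m^*\O(\Theta)\otimes pr_1^*\O(-\Theta) \qquad\text{on } Y\times J,
\]
which is defined over $k$ because $m$, $pr_1$, and $\Theta$ all are. I would then verify that $\mathcal K$ induces a $k$-linear exact equivalence $\Phi_{\mathcal K}\colon D^b(J)\to D^b(Y)$ by base change to $\bar k$: a geometric base point identifies $Y_{\bar k}\cong J_{\bar k}$, the polarization $\phi_\Theta\colon J\xrightarrow{\sim}\hat J$ identifies $J_{\bar k}\cong \hat J_{\bar k}$, and under these $\mathcal K$ becomes the Poincar\'e line bundle up to a twist by a line bundle pulled back from the $J$-factor. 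As $\phi_\Theta$ is an \emph{isomorphism}, $\Phi_{\mathcal K}$ thus becomes a composite of the Poincar\'e Fourier--Mukai equivalence with a pullback along an isomorphism and a line-bundle twist, hence an equivalence over $\bar k$; a $k$-linear functor whose base change is an equivalence is itself one.

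It remains to arrange that $Y$ has no $k$-point while $X=J$ does (it has the origin). The class $[Y]=[\Pic^{g-1}(C)]\in H^1(k,J)$ is nonzero precisely when $C$ has no $k$-rational divisor class of degree $g-1$; since the index $I$ of $C$ (the $\gcd$ of degrees of closed points) divides $\deg K_C = 2g-2$, I would choose $C$ so that $I\nmid (g-1)$, forcing $Y$ to be pointless. This is impossible when $g=1$, where $Y=\Pic^0(C)=J$ is the trivial torsor — matching the fact recalled above that rational points are preserved in genus $1$ — so the phenomenon genuinely requires $g\ge 2$.

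The hardest part, in my view, is the explicit arithmetic input: for every $g\ge 2$ I must exhibit a curve $C$ over $\Q$, and another over $\F_q(t)$ with $q$ odd, whose index does not divide $g-1$, i.e.\ whose $\Pic^{g-1}$ is a nontrivial torsor. For even $g$ this is already delivered by a hyperelliptic curve $y^2=f(x)$ (where oddness of $q$ is used) having no rational point and no rational Weierstrass point, since then $I=2\nmid(g-1)$; for general $g$ one must raise the index by a finer local analysis, for instance engineering $C$ with prescribed behaviour at a few places so that a period--index or Brauer--Manin obstruction makes $\Pic^{g-1}(C)$ pointless. Showing that such $C$ exist over both $\Q$ and $\F_q(t)$ for \emph{all} $g$, and that the resulting $[Y]$ is truly nonzero rather than merely lacking an obvious point, is where the real work lies; by contrast, the derived equivalence itself is formal once $\Theta$ is known to be $k$-rational.
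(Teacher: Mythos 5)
The derived-equivalence half of your proposal is essentially the paper's argument: the kernel $m^*\O(\Theta)\otimes pr_1^*\O(-\Theta)$ is defined over $k$ because $\Theta=W_{g-1}\subset\Pic^{g-1}_C$ is, and after base change to $\bar k$ it differs from (a pullback under the principal polarization of) the Poincar\'e bundle only by line bundles from the two factors, so Orlov's descent lemma finishes the job. The paper packages this slightly differently (it invokes the Poincar\'e bundle on $\Pic^0_C\times\Pic^{g-1}_C$ directly, and separately gives the theta-divisor description you use), but the content is the same.

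The gap is in the arithmetic half, and it is exactly the subtlety the paper flags. The torsor $Y=\Pic^{g-1}_C$ is trivial if and only if $\Pic^{g-1}_C(k)=\Pic^{g-1}(C^s)^G$ is nonempty, i.e.\ there is a \emph{Galois-invariant} divisor class of degree $g-1$; it does not require that class to be represented by a divisor defined over $k$. Your criterion ``index $I\nmid(g-1)$'' only rules out points of the subset $\Pic^{g-1}(C)\subset\Pic^{g-1}_C(k)$, and this inclusion can be proper: the cokernel lands in $\Br(k)$, which is large for $k=\Q$ or $\F_q(t)$. (The relevant invariant is the period, not the index, and the period can be strictly smaller.) So ``$I\nmid(g-1)$'' does not force $Y$ to be pointless, and your proposed even-genus examples are doubly insufficient, since ``no rational point and no rational Weierstrass point'' does not even give $I=2$ — one must exclude closed points of every odd degree. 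The curves used in the paper (Coray--Manoil, Poonen--Stoll) are chosen precisely so that $\Pic^{g-1}_C(k)$ itself is provably empty; e.g.\ the Coray--Manoil curves are everywhere locally soluble, which forces $\Pic^d(C)=\Pic^d_C(k)$ for all $d$ and converts an index computation into the needed statement about the torsor.

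Finally, you leave the existence of suitable curves for general $g$ as ``where the real work lies,'' but no such curves are known (or constructed in the paper) for odd $g\ge5$ over $\Q$ or for $g\ge3$ over $\F_q(t)$. The paper sidesteps this entirely: it takes a genus-$2$ curve $C$ with $Y=\Pic^1_C$ a nontrivial torsor over $X=\Pic^0_C$ and crosses with $E^{g-2}$ for an elliptic curve $E$; then $Y\times E^{g-2}$ is still a nontrivial $X\times E^{g-2}$-torsor, and derived equivalence is preserved under products. Without that (or some substitute), your proof does not cover all $g\ge2$.
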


\noindent Recall that a torsor over an Abelian variety has a rational point if and only if it has a zero-cycle of degree 1, so this theorem contrasts with the behavior of K3 surfaces discussed above.

To prove Theorem~\ref{first_thm}, we mine the literature for curves $C$ defined over $\Q$ and $\F_q(t)$ such that $\Pic^{g-1}_C$ has no rational points.  If $k = \R$, $\Q_p$, or $\F_q((t))$ then $\Pic^{g-1}_C$ always has $k$-points by work of Lichtenbaum \cite{lichtenbaum} and a remark of Poonen and Stoll \cite[Footnote 10]{poonen_stoll}, so for the examples we are interested in, $\Pic^{g-1}_C$ is a counterexample to the Hasse principle.  Then Theorem~\ref{first_thm} follows from the next result:

\begin{thm} \label{curve_thm}
If $C$ is a smooth, projective, geometrically connected curve of genus $g \ge 1$ over an arbitrary field $k$, then there is a $k$-linear exact equivalence
\[ D^b(\Pic^0_C) \cong D^b(\Pic^{g-1}_C). \]
\end{thm}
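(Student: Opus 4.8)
The plan is to realize $P := \Pic^{g-1}_C$ as a torsor under the Jacobian $A := \Pic^0_C$, with action $m\colon A\times P\to P$, $(L,M)\mapsto L\otimes M$, and to produce a Poincar\'e-type Fourier--Mukai kernel on $A\times P$. The feature that makes $\Pic^{g-1}$ rather than $\Pic^0$ the right torsor to compare with $A$ is that $P$ carries a \emph{canonical} theta divisor $\Theta$, namely the image of $\mathrm{Sym}^{g-1}C\to P$ (equivalently the locus of effective classes). This is a reduced effective Cartier divisor defined over $k$ with no choice of base point, whereas on $\Pic^0$ such a divisor can only be pinned down after choosing a rational point, which is precisely what we must avoid assuming.

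Concretely, I would set $\L := m^*\O_P(\Theta)\otimes p_2^*\O_P(\Theta)^{-1}$ on $A\times P$, a line bundle defined over $k$, and consider the $k$-linear functor $\Phi := \Phi_{\L}\colon D^b(A)\to D^b(P)$, $\mathcal{F}\mapsto Rp_{2*}(p_1^*\mathcal{F}\otimes\L)$. The claim is that $\Phi$ is an equivalence, and I would verify this first over $\bar k$. Choosing a point $p_0\in P(\bar k)$ gives an isomorphism $\tau\colon A_{\bar k}\xrightarrow{\sim}P_{\bar k}$, $a\mapsto a\otimes p_0$, under which $\Theta$ corresponds to a theta divisor $\Theta_0$ on $A_{\bar k}$ defining the principal polarization $\phi_{\Theta_0}\colon A_{\bar k}\xrightarrow{\sim}\widehat{A}_{\bar k}$. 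A direct computation via the theorem of the square shows that $(\mathrm{id}\times\tau)^*\L_{\bar k}$ agrees, up to tensoring by pullbacks of line bundles from the two factors, with the Mumford bundle $\Lambda(\O(\Theta_0))=(\mathrm{id}\times\phi_{\Theta_0})^*\mathcal{P}$, where $\mathcal{P}$ is the Poincar\'e bundle on $A\times\widehat A$.

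Since tensoring a kernel by pullbacks from either factor only composes the associated functor with autoequivalences (tensoring by line bundles), and since $\phi_{\Theta_0}$ is an isomorphism, $\Phi_{\L_{\bar k}}$ is the composite of Mukai's equivalence $\Phi_{\mathcal{P}}\colon D^b(A_{\bar k})\xrightarrow{\sim}D^b(\widehat{A}_{\bar k})$ with equivalences, hence is itself an equivalence. To descend this to $k$, I would use the adjoint-kernel criterion: $\Phi$ is an equivalence if and only if the natural maps $\O_{\Delta_A}\to \L^{R}*\L$ and $\L*\L^{R}\to\O_{\Delta_P}$ are isomorphisms, i.e.\ their cones vanish. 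These cones are objects of $D^b(A\times A)$ and $D^b(P\times P)$ defined over $k$, and their formation commutes with the flat base change $k\to\bar k$, since convolution of kernels is built from pullbacks, tensor products, and proper pushforwards, all compatible with flat base change. As the cones vanish after $\otimes_k\bar k$ by the previous step, faithful flatness of $\bar k/k$ forces them to vanish over $k$.

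The main obstacle is twofold. First, one must verify that $\Theta$, and hence $\L$, is genuinely defined over $k$; this rests on the canonicity of the theta divisor on $\Pic^{g-1}$ and is the conceptual heart of why the theorem holds (for $g=1$ the statement is vacuous, as $\Pic^{g-1}=\Pic^0$). Second, in the descent step one must take care that the \emph{equivalence} property, and not merely the kernel, passes from $\bar k$ to $k$, which is exactly what the vanishing of the $k$-rational cones under faithfully flat base change secures.
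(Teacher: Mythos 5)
Your proof is correct and is essentially the paper's argument: your kernel $m^*\O_P(\Theta)\otimes p_2^*\O_P(\Theta)^{-1}$ is, up to a line bundle pulled back from the second factor, exactly the line bundle $\O(D)$ attached to the divisor $D=\{(L,M):H^1(L\otimes M)\neq 0\}=m^{-1}(\Theta)$ used in \S\ref{jacobian_sec}, and your verification over $\bar k$ (identify the kernel with Mukai's Poincar\'e bundle up to pullbacks from the two factors via a choice of base point, then invoke Mukai's theorem) is the same. Your hands-on descent step, via vanishing of the unit/counit cones after faithfully flat base change, is precisely the content of the lemma of Orlov that the paper cites to reduce to the algebraically closed case.
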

\noindent The proof is a repackaging of Mukai's classic derived equivalence \cite{fourier_mukai} between an Abelian variety and its dual.  We include a broader discussion of torsors over Abelian varieties in \S\ref{ab_sec}.
\bigskip

The varieties $X$ and $Y$ in Theorem~\ref{first_thm} become isomorphic after a finite field extension, but in \S\ref{hk_sec} we present a more sophisticated counterexample over $\Q$, in which the varieties remain different even over $\C$.  These are our hyperk\"ahler 4-folds.

\begin{thm} \label{hk_thm}
There is an explicit K3 surface $S$, defined over $\Q$, and two smooth, projective, 4-dimensional moduli spaces $X$ and $Y$ of sheaves on $S$, such that $X$ has infinitely many rational points, $Y$ has no zero-cycle of degree 1, and there is a $\Q$-linear exact equivalence
\[ D^b(X) \cong D^b(Y). \]
The spaces $X$ and $Y$ are not birational, even over $\C$.
\end{thm}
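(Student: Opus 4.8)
The plan is to realize $X$ and $Y$ as the two relative compactified Jacobians of a genus-$2$ fibration on a K3 surface, so that the equivalence $D^b(X)\cong D^b(Y)$ becomes a \emph{relative} version of Theorem~\ref{curve_thm}. First I would choose a K3 surface $S$ over $\Q$ with N\'eron--Severi group as small as possible---ideally $\NS(S)=\Z\cdot D$ with $D^2=2$. Then $|D|$ is a net with $|D|\cong\P^2$ whose members are genus-$2$ curves, exhibiting $S$ as a family of genus-$2$ curves over $\P^2$. For the Mukai vectors $v=(0,D,\chi)$ one has $v^2=D^2=2$, so the Beauville--Mukai moduli spaces $M(0,D,\chi)$ of semistable sheaves (for a $v$-generic polarization) are smooth, projective, holomorphic-symplectic fourfolds fibered over $|D|=\P^2$, with fibers the compactified Jacobians of the corresponding curves. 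I would take $X=\Picbar^0(S/\P^2)$ and $Y=\Picbar^{\,g-1}(S/\P^2)=\Picbar^1(S/\P^2)$, that is, $M(0,D,-1)$ and $M(0,D,0)$.

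The derived equivalence is the heart of the matter, and I would obtain it by globalizing the fiberwise equivalence of Theorem~\ref{curve_thm} over $\P^2$. Concretely, I would construct a relative Poincar\'e kernel on $X\times_{\P^2}Y$---the autoduality sheaf between the degree-$0$ and degree-$(g-1)$ compactified Jacobians of the family---and show that the induced Fourier--Mukai functor is an equivalence $D^b(X)\cong D^b(Y)$. Because $S$, $D$, the fibration, and the kernel are all defined over $\Q$, the equivalence descends to $\Q$. The hard part here is that the fibration has singular fibers, so Mukai's transform cannot simply be applied fiber by fiber: I would need autoduality of compactified Jacobians of \emph{integral singular} genus-$2$ curves together with a single coherent (possibly twisted) relative Poincar\'e sheaf over the whole base. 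I would either invoke the existing theory of Fourier--Mukai transforms for compactified Jacobians of integral curves and for Beauville--Mukai systems, or build the kernel directly as a universal sheaf on $X\times Y$ and verify the equivalence by the standard kernel criteria.

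For the arithmetic I would argue as follows. The space $X=\Picbar^0$ carries the zero-section $\P^2\hookrightarrow X$ (the trivial line bundle on each curve is a stable sheaf), and since $\P^2(\Q)$ is infinite, so is $X(\Q)$; more robustly, for each $\Q$-point $[C]\in|D|(\Q)$ the hyperelliptic class gives $\Q$-rational divisors of every even degree on $C$, producing rational points in every even-degree relative Picard. For $Y=\Picbar^{\,g-1}$ I would exhibit a transcendental Brauer--Manin obstruction to a zero-cycle of degree $1$. Here $Y$ is a torsor under the relative Jacobian $X$, and, mining the literature exactly as in the proof of Theorem~\ref{first_thm}, I would arrange that the generic fiber's $\Pic^{g-1}$ carries no rational divisor of odd degree and that this is enforced at every place by a class $\alpha\in\Br(Y)$ not coming from $\NS$. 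Showing that $\alpha$ obstructs \emph{all} degree-$1$ zero-cycles---identifying the class, confirming it is transcendental, and evaluating the Brauer--Manin pairing at each place---is the subtle step.

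Finally, to see that $X$ and $Y$ are not birational even over $\C$, I would argue lattice-theoretically. Birational holomorphic-symplectic fourfolds have Hodge-isometric second cohomology, and by the results of Markman and Bayer--Macr\`i the birational class of $M(0,D,\chi)$ is governed by the orbit of the Mukai vector under the monodromy/Weyl-group action on the Mukai lattice; with $\NS(S)=\Z D$ and $D^2=2$ this orbit depends on the parity of the degree, so $\Picbar^0$ and $\Picbar^1$ lie in distinct orbits and are not birational. This is exactly where the example surpasses Theorem~\ref{first_thm}: there is no odd-degree divisor class on $S$, even over $\C$, with which to identify the two relative Jacobians. Overall I expect the two genuine obstacles to be the derived equivalence and the Brauer--Manin computation---making the relative Fourier--Mukai transform work across the singular fibers and descend to $\Q$, and then pinning down the transcendental class on $Y$ and proving that it kills degree-$1$ zero-cycles.
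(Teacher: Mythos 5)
Your architecture matches the paper's exactly: $S$ a degree-$2$ K3 over $\Q$ with geometric Picard rank~$1$, $X = M_h(0,h,-1)$ and $Y = M_h(0,h,0)$ as compactified relative Jacobians of the genus-$2$ fibration over $|h| \cong \P^2$, the derived equivalence from a relative Poincar\'e sheaf \`a la Arinkin extended over the singular fibers, the zero-section $\P^2 \hookrightarrow X$ for rational points, and a lattice computation (the paper uses Sawon's discriminant comparison, $-4$ versus $-1$, for $\Pic$ with the Beauville--Bogomolov form) for non-birationality. All of that is sound, and your observation that rank-$1$ geometric Picard group rules out non-integral members of $|h|$ over $\overline{\Q}$ does take care of smoothness of $Y$.

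The genuine gap is the step you yourself flag as ``the subtle step'': you never say \emph{how} the Brauer class $\alpha \in \Br(Y)$ obstructing the universal sheaf actually kills all degree-$1$ zero-cycles, and this is where all the real work and all the constraints on the choice of $S$ live. The mechanism is: (i) choose the branch sextic so that $S(\R) = \varnothing$; then for any real point $y \in Y(\R)$ with $\alpha|_y = 0$ one would get an honest sheaf $F$ on $S_\R$ with $\chi(F)=0$ and $c_1(F)^2 = 2$, hence $\deg c_2(F) = 1$ by Riemann--Roch, impossible when $S$ has no real points --- so $\alpha$ is nonzero at \emph{every} real point; (ii) at every non-Archimedean place, one must show $\alpha$ vanishes on every fiber $\Picbar^1_C$, which requires controlling the reduction of every curve $C \in |h|$ modulo every prime: the paper imposes that there are no tritangent lines to the sextic over $\overline{\F}_2$ and that every tritangent line over $\F_q$ ($q$ odd) splits into two rational curves over $\F_q$ rather than $\F_{q^2}$, so that Lang--Weil plus Hensel produce a degree-$1$ zero-cycle in the smooth locus, which kills $\alpha$ by a twisted Euler-characteristic argument; (iii) summing invariants over the places of an odd-degree number field then gives an odd multiple of $1/2$, a contradiction. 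Your plan to ``mine the literature exactly as in the proof of Theorem~\ref{first_thm}'' does not supply this: that mining produced individual curves violating the Hasse principle, whereas here one needs a single global class on the $4$-fold with prescribed local behavior at \emph{all} places simultaneously, which forces the conditions on $S(\R)$ and on tritangent lines modulo every prime, and in turn forces the (computationally nontrivial) search for an explicit sextic satisfying them --- the theorem asserts an \emph{explicit} $S$, and producing one is a substantial part of the proof. Transcendence of $\alpha$ also needs an argument (the paper: rank $1$ forces $H^1(G,\Pic(Y_{\overline{\Q}}))=0$, so an algebraic class would be constant and could not obstruct the Hasse principle), which you assert but do not derive.
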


In fact $Y$ has points over $\R$ and over $\Q_p$ for every prime $p$, hence is a counterexample to the Hasse principle.  We use the class $\alpha \in \Br(Y)$ that obstructs the existence of a universal sheaf on $S \times Y$ as a Brauer--Manin obstruction.  It is a transcendental Brauer class: it remains non-trivial in $\Br(Y_{\overline\Q})$ or $\Br(Y_\C)$.

This counterexample is related to the Abelian counterexamples above in that $X$ is fibered over $\P^2$ in Jacobians of curves of genus 2, and $Y$ is fibered in $\Pic^1$ of the same curves.  The derived equivalence is a version of our earlier equivalence $D^b(\Pic^0_C) \cong D^b(\Pic^1_C)$ in families; the extension to the singular fibers is due to Arinkin \cite{arinkin}, and was used in work of Addington, Donovan, and Meachan \cite{adm}, but we give a simplified description.

By taking fibers of $X$ and $Y$ over $\Q$-points of $\P^2$ we can get more explicit examples of Theorem~\ref{first_thm}.  By taking generic fibers we get an example of Theorem~\ref{first_thm} over the function field $\Q(x,y)$, or indeed $\C(x,y)$.  By taking the preimage of a general curve in $\P^2$ we get derived equivalent 3-folds of Kodaira dimension 1, where one has a rational point and one does not.

Theorem~\ref{hk_thm} stands in contrast to a result of Frei \cite[Thm.~1]{frei}, who showed that over a \emph{finite} field, two smooth projective moduli spaces of sheaves on a given K3 surface have the same number of points as soon as they have the same dimension.

\paragraph{Acknowledgments}
The computations in \S\ref{hk_sec} were done in Magma \cite{magma}, and drew heavily on the program accompanying Berg and V\'arilly-Alvarado's paper \cite{bva}; we thank them for expert advice, S.~Elsenhans for a crucial Magma tip, and B.~Young for computer time.  We also thank A.~Auel for suggesting Lemma~\ref{integral_reduction}, which simplified our arguments,
J.-L.~Colliot-Th\'el\`ene, 
E.~Elmanto, 
R.~Takahashi, 
S.~Tirabassi,
B.~Viray, and 
D.~Zureick-Brown 
for helpful discussions, and the referees for their comments and corrections.

B.A.\ was supported by NSF grant no.\ DMS-1552766, and by grant no.\ DMS-1440140 while in residence at the Mathematical Sciences Research Institute in Spring 2019.  K.H.\ was supported by NSF grant no.\ DMS-1606268.

\paragraph{Conventions}
For an arbitrary field $k$, we let $k^s$ denote its separable closure, $\bar k$ its algebraic closure, and $G = \operatorname{Gal}(k^s/k)$ its absolute Galois group.  For a variety $X$ over $k$, we let $X^s = X \times_k k^s$ and $\bar X = X \times_k \bar k$.  We let $\Pic_X$ denote the Picard scheme and $\Pic(X)$ the Picard group; the group of $k$-rational points of $\Pic_X$ is $\Pic(X^s)^G$, which may be strictly bigger than $\Pic(X)$, as we will discuss at length.

Because we consider sheaves of rank 0 and sheaves on reducible spaces, stability for us always means Gieseker stability, defined in terms of the reduced Hilbert polynomial, rather than slope stability.

\section{Abelian counterexamples} \label{ab_sec}

\subsection{Torsors over Abelian varieties}
Let $A$ be an Abelian variety over an arbitrary field $k$.  As Poonen and Stoll discuss in \cite[\S4]{poonen_stoll}, the short exact sequence of $G$-modules
\[ 0 \to \Pic^0(A^s) \to \Pic(A^s) \to \NS(A^s) \to 0 \]
yields a long exact sequence
\begin{multline*}
0 \to \Pic^0(A) \to \Pic(A) \to \NS(A^s)^G \\
\to H^1(G, \Pic^0(A^s)) \to H^1(G, \Pic(A^s)) \dotsb.
\end{multline*}
For any divisor class
\[ \lambda \in \NS(A^s)^G, \]
we can consider the associated component of the Picard scheme
\[ \Pic^\lambda_A \subset \Pic_A, \]
which is a torsor over the dual Abelian variety $\hat A = \Pic^0_A$, trivial if and only if $\lambda$ is in the image of the map $\Pic(A) \to \NS(A^s)^G$ above: that is, in $\NS(A)$.
\begin{thm} \label{ab_thm}
There is a $k$-linear exact equivalence
\[ D^b(A) \cong D^b(\Pic^\lambda_A). \]
\end{thm}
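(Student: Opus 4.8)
The plan is to realize the equivalence as a Fourier--Mukai transform with kernel a Poincaré sheaf, exactly as in Mukai's theorem \cite{fourier_mukai}, the only new point being that the kernel must be defined over $k$ even though $\Pic^\lambda_A$ need not have a $k$-point. The crucial observation is that the rigidification needed to construct a universal line bundle is supplied not by a point of the torsor but by the identity section $0 \in A(k)$, which always exists. Concretely, $\Pic^\lambda_A$ represents the functor sending a $k$-scheme $T$ to the set of line bundles on $A \times_k T$ of class $\lambda$, rigidified along $\{0\} \times T$; the universal such bundle is a genuine (untwisted) line bundle $\mathcal P$ on $A \times_k \Pic^\lambda_A$. (For $\lambda = 0$ this is the usual Poincaré bundle on $A \times_k \hat A$.) I would take $\mathcal P$ as the kernel of
\[ \Phi = \Phi_{\mathcal P} \colon D^b(\Pic^\lambda_A) \to D^b(A), \qquad \Phi(E) = Rp_{A,*}\big(p_{\Pic}^* E \otimes \mathcal P\big). \]

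Since $\mathcal P$ is defined over $k$, the functor $\Phi$ is automatically $k$-linear and exact, and it admits left and right adjoints given by Fourier--Mukai functors with the dual kernels. The entire content of the theorem is therefore that $\Phi$ is an equivalence; equivalently, that the two convolution kernels computing $\Phi$ composed with its adjoint are isomorphic to the structure sheaf $\O_\Delta$ of the diagonal.

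I would verify this after base change to the separable closure $k^s$. Because $\NS$ is a quotient of $\Pic$, the class $\lambda \in \NS(A^s)^G$ lifts to an honest line bundle $L_0$ on $A^s$; hence $\Pic^\lambda_{A^s}$ has a $k^s$-point and is a trivial torsor, and translation by $L_0$ gives an isomorphism $\hat A^s \xrightarrow{\sim} \Pic^\lambda_{A^s}$. Under this isomorphism the pullback of $\mathcal P$ becomes the ordinary Poincaré bundle twisted by $p_A^* L_0$, so by the projection formula $\Phi_{k^s}$ is Mukai's equivalence composed with the autoequivalence $-\otimes L_0$; in particular it is an equivalence, and its convolution kernels are isomorphic to $\O_\Delta$. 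The comparison maps from the convolution kernels over $k$ to $\O_\Delta$ base-change to these isomorphisms, and whether a map of complexes of coherent sheaves is an isomorphism may be checked after the faithfully flat extension $k \to k^s$. Hence the comparison maps are already isomorphisms over $k$, and $\Phi$ is an equivalence.

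The main thing to get right is the first step: producing the kernel over $k$ with \emph{no} rational point available on $\Pic^\lambda_A$. I expect the rest to be routine, since once the kernel is in hand everything geometric reduces after base change to $k^s$ to Mukai's theorem. One should double-check that no Brauer twist intervenes in $\mathcal P$: this is exactly where the point $0 \in A(k)$ is used, guaranteeing a rigidification and hence an honest universal sheaf, in contrast to the moduli spaces of sheaves considered later in the paper.
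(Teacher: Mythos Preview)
Your proposal is correct and follows essentially the same strategy as the paper: construct the Poincar\'e bundle on $A \times \Pic^\lambda_A$ over $k$ using the rational point $0 \in A(k)$ for rigidification, then verify the resulting Fourier--Mukai functor is an equivalence after base change (the paper goes to $\bar k$, you to $k^s$) by identifying it with Mukai's equivalence composed with a line-bundle twist. The only difference is cosmetic: the paper replaces your hands-on descent via the unit/counit convolution kernels with a citation to \cite[Lem.~2.12]{orlov_abelian}.
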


\noindent It is interesting to contrast this statement with \cite[Cor.~5.2]{akw}, which says that in characteristic zero, if $\operatorname{End}(A^s) = \Z$ and the inclusion $\NS(A) \subset \NS(A^s)^G = \Z$ is an equality, then two derived equivalent $A$-torsors necessarily generate the same subgroup of $H^1(G, \Pic^0(A^s))$.

\begin{proof}[Proof of Theorem~\ref*{ab_thm}]
Because $A$ has a rational point, there is a Poincar\'e line bundle $P$ on $A \times \Pic^\lambda_A$ by \cite[Exercise 9.4.3]{kleiman} or by Lemma~\ref{point_kills_alpha} below.  We claim that the functor
\[ F\colon D^b(A) \to D^b(\Pic^\lambda_A) \]
induced by $P$ is an equivalence.  By \cite[Lem.~2.12]{orlov_abelian}, it is enough to prove it after base change to $\bar k$.

So assume that $k$ is algebraically closed, choose a base point $[M] \in \Pic^\lambda(A)$, and use it to identify $\Pic^\lambda_A$ with $\hat A$.  Then $P \otimes \pi_1^* M^\vee$ is a Poincar\'e bundle on $A \times \hat A$, hence induces an equivalence $D^b(A) \to D^b(\hat A)$ by Mukai's theorem \cite{fourier_mukai}; see also Polishchuk's book \cite[Ch.~11 and 17]{polishchuk_ab}, or Huybrechts' book \cite[Ch.~9]{huybrechts_fm} for another account in characteristic zero.  Now the functor induced by $P \otimes \pi_1^* M^\vee$ is just tensoring with $M^\vee$ (which is an equivalence) followed by $F$, so it is an equivalence if and only if $F$ is.
\end{proof}

\subsection{Proof of Theorem~\ref*{curve_thm} and discussion of Jacobians} \label{jacobian_sec}

We could deduce Theorem~\ref{curve_thm} from Theorem~\ref{ab_thm} by letting $A = \Pic^0_C$ and letting $\lambda \in \NS(A^s)^G$ be the class of the $\Theta$-divisor; then $\Pic^\lambda_A \cong \Pic^{g-1}_C$ by \cite[Cor.~4]{poonen_stoll}.\footnote{Recall that the $\Theta$-divisor lives most naturally in $\Pic^{g-1}_C$, as the image of the Abel--Jacobi map $\operatorname{Sym}^{g-1}(C) \to \Pic^{g-1}_C$.  If $\Pic^{g-1}_C$ has a $k$-point then we can identify $\Pic^0_C \cong \Pic^{g-1}_C$ and get a $\Theta$-divisor on $A = \Pic^0_C$, unique up to translation.  But if $\Pic^{g-1}_C(k) = \varnothing$ then we only get a class in $\NS(A^s)^G$, not $\NS(A)$.}

But we can also describe the Poincar\'e bundle on $\Pic^0_C \times \Pic^{g-1}_C$ as the line bundle associated to a very explicit divisor $D$, and this will prove useful in \S\ref{hk_sec}.  Over $\bar k$, we can write
\[ D = \Bigl\{ (L,M) : H^1(L \otimes M) \ne 0 \Bigr\} \subset \Pic^0_{\bar C} \times \Pic^{g-1}_{\bar C}. \]
We see that the fiber of $D$ over a point of $\Pic^{g-1}_{\bar C}$ is a translate of the $\Theta$-divisor on $\Pic^0_{\bar C}$, in such a way that $\O(D)$ is a universal bundle for $\Pic^{g-1}_{\bar C}$ as a moduli space of line bundles on $\Pic^0_{\bar C}$.

To see that $D$ is defined over $k$, we can describe it as the support of a certain twisted sheaf.  Let $\alpha \in \Br(\Pic^0_C)$ be the obstruction to the existence of a universal line bundle on $C \times \Pic^0_C$, and let $\L$ be the $\pi_2^* \alpha$-twisted universal bundle.  Similarly, let $\beta \in \Br(\Pic^{g-1}_C)$ and $\M$ on $C \times \Pic^{g-1}_C$.  Then $D$ is the support of the $(\alpha \boxtimes \beta)$-twisted sheaf
\begin{equation} \label{rank_0_sheaf}
R^1 \pi_{23,*}(\pi_{12}^* \L \otimes \pi_{13}^* \M),
\end{equation}
where $\pi_{ij}$ are the projections from $C \times \Pic^0_C \times \Pic^{g-1}_C$ onto any two factors.
\pagebreak 

For another argument that $\O(D)$ agrees with the Poincar\'e line bundle, we could use Deligne's description of the latter as the line bundle whose fiber at a geometric point $(L,M)$ is
\begin{equation} \label{deligne}
(\det H^*(L \otimes M))^{-1} \otimes \det H^*(L) \otimes \det H^*(M) \otimes (\det H^*(\O_C))^{-1}.
\end{equation}
We learned this description from Arinkin's paper \cite{arinkin}; see Polishchuk's book \cite[\S22.3]{polishchuk_ab} for another account.  Globally, the first term of \eqref{deligne} is
\[ (\det R^0 \pi_{23,*}(\pi_{12}^* \L \otimes \pi_{13}^* \M))^{-1} \otimes (\det R^1 \pi_{23,*}(\pi_{12}^* \L \otimes \pi_{13}^* \M)). \]
Now $R^0 \pi_{23,*}(\pi_{12}^*\L\otimes\pi_{13}^*\M)$ vanishes, and $R^1
\pi_{23,*}(\pi_{12}^*\L\otimes\pi_{13}^*\M)$ is supported on $D$ and has generic rank 1 there, so its determinant as a sheaf on $\Pic^0_C \times \Pic^{g-1}_C$ is the (untwisted) line bundle $\O(D)$.  The other terms of \eqref{deligne} just modify $\O(D)$ by line bundles pulled back from $\Pic^0_C$ or $\Pic^{g-1}_C$, so they don't change the fact that it induces a derived equivalence.

The interested reader may consult \cite[Proof of Prop.~2.1(a)]{adm} for more details on the generalization of Mukai's equivalence from $\Pic^0_C \times \Pic^0_C$ to $\Pic^m_C \times \Pic^n_C$, and [ibid., Rmk.~2.3] for more (twisted) derived equivalences.

\subsection{Proof of Theorem~\ref*{first_thm}}

To deduce Theorem~\ref{first_thm} from Theorem~\ref{curve_thm}, it is enough to find genus-$g$ curves $C$ such that $\Pic^{g-1}_C$ has no rational points.  There is a subtlety, in that the inclusion
\begin{equation} \label{pic_inclusion_for_curves}
\Pic^d(C) \subset \Pic^d_C(k) = \Pic^d(C^s)^G
\end{equation}
may be proper in general.

Coray and Manoil \cite[Prop.~4.2]{coray_manoil} showed that for any $g \ge 1$, the hyperelliptic curve $C$ of genus $g$ determined by
\[ y^2 = 605 \cdot 10^6 x^{2g+2} + (18x^2-4400)(45x^2-8800) \]
has points over $\R$ and over $\Q_p$ for every prime $p$, so the inclusion \eqref{pic_inclusion_for_curves} is an equality for all $d$ by [ibid., Cor.~2.5]; but that $\Pic^1(C) = \varnothing$.  Bhargava, Gross, and Wang \cite[Thm.~2]{bgw} later showed that a positive proportion of hyperelliptic curves over $\Q$ have this property, by studying pencils of quadrics.  Note that if $C$ is hyperelliptic and $g$ is even then $\Pic^1_C \cong \Pic^{g-1}_C$.

Poonen and Stoll gave explicit hyperpelliptic curves of even genus over $\Q$ for which $\Pic^{g-1}_C$ has no rational points in \cite[Props.~26, 27, 28]{poonen_stoll}, and studied their density in [ibid.,~\S9].  They gave a non-hyperelliptic example of genus 3 over $\Q$, namely the plane quartic curve
\[ x^4 + py^4 + p^2 z^4 = 0 \]
for any $p \equiv -1 \pmod{16}$, in [ibid., Prop.~29].  And they gave a genus-2 example over $\F_q(t)$ for $q$ odd, namely
\[ y^2 = tx^6 + x - at \]
for any $a$ that is not a square in $\F_q$, in [ibid., Prop.~30].

While we expect that there are similar examples over $\Q$ in odd genus $g \ge 5$, and over $\F_q(t)$ in genus $g \ge 3$, we did not find them in the literature.  But to finish the proof of Theorem~\ref{first_thm}, we can just take the examples above and cross with an elliptic curve: let $C$ be a genus-2 curve such that $Y = \Pic^1_C$ is a non-trivial torsor over the Abelian surface $X = \Pic^0_C$, and let $E$ be an elliptic curve; then for any $g > 2$ we see that $Y \times E^{g-2}$ is a non-trivial torsor over $X \times E^{g-2}$, and the two have equivalent derived categories \cite[Exercise~5.20]{huybrechts_fm}.  \qed

\section{Brauer classes on compactified Picard schemes} \label{big_pic_sec}

Let $X$ be a projective, geometrically integral, but not necessarily smooth variety over an arbitrary field $k$.  For a divisor class $\lambda \in \NS(X^s)^G$, there is an inclusion
\begin{equation} \label{pic_inclusion}
 \Pic^\lambda(X) \ \subset \ \Pic^\lambda_X(k) = \Pic^\lambda(X^s)^G,
\end{equation}
which we encountered for smooth curves in the proof of Theorem~\ref{first_thm} but which we now consider more broadly in preparation for \S\ref{hk_sec}.

Coray and Manoil \cite{coray_manoil} have studied the inclusion \eqref{pic_inclusion} using the exact sequence
\begin{equation} \label{hochschild-serre_thing}
0 \to \Pic(X) \to \Pic(X^s)^G \to \Br(k) \to \Br(X) \to \dots
\end{equation}
coming from the Hochschild--Serre spectral sequence.  We will study it using a class in $\Br(\Pic^\lambda_X)$ that arises from viewing the Picard scheme as a moduli space of stable sheaves on $X$.

For standard results on moduli spaces of sheaves, our reference is Huybrechts and Lehn's book \cite{huybrechts_lehn}, together with Langer's papers \cite{langer1, langer2} which make the results available in positive and mixed characteristic.  The key point for us is that a moduli space $M$ of (S-equivalence classes of) semi-stable sheaves with given Hilbert polynomial is projective, and the open subspace $M_\text{stab}$ 
parametrizing geometrically stable sheaves is in general only quasi-projective but carries a Brauer class that obstructs the existence of a universal sheaf on $X \times M_\text{stab}$.  This Brauer class is associated to a principal PGL$_n$-bundle coming from the GIT construction of the moduli space \cite[Cor.~4.3.5]{huybrechts_lehn}.  It can be realized as an \'etale $\P^n$-bundle as follows: by boundedness, there is an $N \gg 0$, depending only on the Hilbert polynomial, such that for every semi-stable sheaf $F$, the twist $F(N)$ is globally generated and has no higher cohomology; then we take the bundle over $M_\text{stab}$ whose fiber at $F$ is $\P H^0(F(N))$.  
For a stackier description, we could say that a geometrically stable sheaf is simple, so its automorphism group is $\G_m$, so the moduli stack of geometrically stable sheaves is a $\G_m$-gerbe over the moduli space; compare \cite[Cor.~4.3.3]{Lieblich}.

Because $X$ is geometrically integral, every rank-1 torsion-free sheaf on $X$ is geometrically stable with respect to any ample line bundle.  If $X$ is smooth, then $\Pic^\lambda_X$ is closed in the moduli space of rank-1 torsion-free sheaves on $X$, but if not then we can consider its closure
\[ \Picbar^\lambda_X. \]
Let
\[ \alpha_\lambda \in \Br(\Picbar^\lambda_X) \]
be the Brauer class that obstructs the existence of a universal sheaf on $X \times \Picbar^\lambda_X$.\footnote{Note that $\alpha_\lambda$ is the restriction of a natural Brauer class that lives on the whole moduli space of geometrically stable sheaves, or indeed of simple sheaves, so there is no need to worry about ramification at the boundary components of $\Picbar^\lambda_X$.}

The inclusion \eqref{pic_inclusion} above can be understood as follows: for any extension field $K/k$, we have
\[ \Pic^\lambda(X_K) = \{ \ell \in \Pic^\lambda_X(K) : \alpha_\lambda|_\ell = 0 \in \Br(K) \}. \]

The following lemma is essentially well-known for $\Pic^\lambda_X$ -- see \cite[Cor.~2.3]{coray_manoil} for an approach using the exact sequence \eqref{hochschild-serre_thing} -- but in the next section we need it for $\Picbar^\lambda_X$, which requires different methods.

\begin{lem} \label{point_kills_alpha}
Let $X$ be a projective, geometrically integral variety over an arbitrary field $k$, let $\lambda \in \NS(X^s)^G$, and let $\Picbar^\lambda_X$ and $\alpha_\lambda \in \Br(\Picbar^\lambda_X)$ be as above.  If $X$ has a smooth $k$-point, or more generally a zero-cycle of degree 1 supported in its smooth locus, then $\alpha_\lambda = 0$.
\end{lem}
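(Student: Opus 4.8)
The plan is to trivialize $\alpha_\lambda$ by restricting a twisted universal sheaf to the given point. Recall that $\alpha_\lambda$ is, by construction, the obstruction to descending the tautological family on the $\G_m$-gerbe of stable sheaves to an honest sheaf on $X \times \Picbar^\lambda_X$; equivalently, there is a universal sheaf $\mathcal U$ on $X \times \Picbar^\lambda_X$ which is only $\pi_2^* \alpha_\lambda$-twisted, and $\alpha_\lambda = 0$ precisely when one can untwist it, i.e.\ when there is an $\alpha_\lambda$-twisted line bundle on $\Picbar^\lambda_X$. So it suffices to produce such a twisted line bundle.

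First I would treat the case of a smooth $k$-point $x \in X(k)$. Restricting the twisted universal sheaf along the section $\{x\} \times \Picbar^\lambda_X \hookrightarrow X \times \Picbar^\lambda_X$ yields an $\alpha_\lambda$-twisted coherent sheaf $\mathcal U|_{\{x\}}$ on $\Picbar^\lambda_X$. The key point is that this restriction is a twisted \emph{line bundle}: $\mathcal U$ is flat over $\Picbar^\lambda_X$, so restriction commutes with base change and the fibre of $\mathcal U|_{\{x\}}$ over a point $[F]$ is $F \otimes \kappa(x)$; since every sheaf $F$ parametrized by $\Picbar^\lambda_X$ is torsion-free of rank $1$ and $x$ lies in the smooth locus, $F$ is locally free of rank $1$ near $x$, so the fibre has constant dimension $1$ and $\mathcal U|_{\{x\}}$ is locally free of rank $1$. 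Tensoring $\mathcal U$ by $\pi_2^*(\mathcal U|_{\{x\}})^\vee$ then produces an untwisted universal sheaf, whence $\alpha_\lambda = 0$.

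For a closed point $x$ of degree $d = [\kappa(x):k]$ in the smooth locus, the same construction carried out over $\kappa(x)$ --- where $x$ becomes a rational point of $X_{\kappa(x)}$ and $\alpha_\lambda$ pulls back to the analogous class on $\Picbar^\lambda_{X_{\kappa(x)}} = (\Picbar^\lambda_X)_{\kappa(x)}$ --- shows that $\alpha_\lambda$ restricts to $0$ under the base-change map $(\Picbar^\lambda_X)_{\kappa(x)} \to \Picbar^\lambda_X$. Applying the corestriction (transfer) map for this finite degree-$d$ morphism and using $\mathrm{cores}\circ\mathrm{res} = d$, I get $d\,\alpha_\lambda = 0$. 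Finally, if $z = \sum_i n_i [x_i]$ is a zero-cycle of degree $1$ supported in the smooth locus, with $d_i = [\kappa(x_i):k]$, then $\alpha_\lambda = \bigl(\sum_i n_i d_i\bigr)\alpha_\lambda = \sum_i n_i\,(d_i\alpha_\lambda) = 0$, completing the proof.

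The main obstacle is the italicized claim that $\mathcal U|_{\{x\}}$ is genuinely a twisted line bundle, i.e.\ that every torsion-free rank-$1$ sheaf in the closure $\Picbar^\lambda_X$ is locally free at the smooth point $x$. When $X$ is a curve this is automatic, since $\O_{X,x}$ is a discrete valuation ring and a torsion-free module over it is free; this is the only case needed for the application in \S\ref{hk_sec}. In higher dimension one must rule out flat limits of line bundles that acquire a non-locally-free point in the smooth locus: the expected argument is that on the regular local ring $\O_{X,x}$ the reflexive hull of such a limit is invertible and differs from the limit only in codimension $\ge 2$, which should be incompatible with the constancy of the Hilbert polynomial across a flat family. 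Making this reflexive-hull-versus-Hilbert-polynomial comparison precise --- in particular reconciling the codimension count on the non-proper smooth locus with the Hilbert polynomial computed on all of $X$ --- is the step I expect to require the most care.
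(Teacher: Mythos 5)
Your overall strategy---trivialize $\alpha_\lambda$ by restricting the twisted universal sheaf over the given point, then handle closed points of higher degree by corestriction and a g.c.d.\ argument---is close in spirit to the paper's, and your second half (restriction--corestriction giving $d\,\alpha_\lambda=0$, then summing over a degree-$1$ zero-cycle) is fine. But the step you yourself flag as delicate is a genuine gap, and it sits exactly where the lemma is non-trivial. You need every sheaf parametrized by $\Picbar^\lambda_X$ --- not just the line bundles in $\Pic^\lambda_X$, but the rank-$1$ torsion-free sheaves in its closure, which appear precisely when $X$ is singular --- to be locally free at the chosen smooth point $x$. For curves this follows from the DVR argument you give, and that does cover the application in \S\ref{hk_sec}; but the lemma is stated for arbitrary projective geometrically integral $X$, and your proposed reflexive-hull-versus-Hilbert-polynomial argument is only a sketch. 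It requires knowing that the double dual of a flat limit is invertible near $x$, that it lies in a family with the same Hilbert polynomial as $\Pic^\lambda_X$ (so that the length of the cokernel of $F\hookrightarrow F^{\vee\vee}$ forces a contradiction), and that nothing goes wrong along the non-normal or singular locus of $X$ where $F^{\vee\vee}$ need not be invertible. None of this is carried out, so as written the proof is incomplete in the generality claimed.

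The paper's proof sidesteps local freeness entirely, and the same move would repair your argument: instead of the underived restriction $U_\lambda|_{\{x\}}$, use the derived restriction, or more generally $R\pi_{2,*}(U_\lambda\otimes\pi_1^*E)$ for $E$ a finite vector-bundle resolution of $\O_\xi$ with $\xi\subset X_{\mathrm{sm}}$ of length $d$. This is an $\alpha_\lambda$-twisted \emph{perfect complex}, and the rank of a twisted perfect complex kills the Brauer class; its rank is the numerical Euler characteristic $\chi(c\cdot[\O_\xi])=d$, which is computed from the class $c\in K_{\mathrm{num}}(X^s)^G$ alone (equivalently, by Serre's theorem on regular local rings, $\sum(-1)^i\dim\operatorname{Tor}_i^{\O_{X,x}}(F,\kappa(x))$ equals the generic rank of $F$ whenever $x$ is a smooth point, whether or not $F$ is locally free there). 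So one gets $d\,\alpha_\lambda=0$ for each such $\xi$ with no analysis of the boundary sheaves, and the g.c.d.\ argument finishes as in your last paragraph. I recommend replacing your ``twisted line bundle'' step with this rank-of-a-perfect-complex argument; the rest of your write-up can stand.
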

\begin{proof}
Let $c$ be the class in $K_\text{num}(X^s)^G$ corresponding to $\lambda \in \NS(X^s)^G$.  First we argue that for any vector bundle $E$ on $X$, defined over $k$, the number $N_E := \chi(c \cdot [E])$ satisfies $N_E \cdot \alpha_\lambda = 0$.  A similar claim is valid for any moduli space of geometrically stable sheaves, not only $\Picbar^\lambda_X$.  Let $\pi_1$ and $\pi_2$ be the projections from $X \times \Picbar^\lambda_X$, and let $U_\lambda$ be the $\pi_2^* \alpha_\lambda$-twisted universal sheaf on $X \times \Picbar^\lambda_X$.  Then $R\pi_{2,*}(U_\lambda \otimes \pi_1^* E)$ is an $\alpha_\lambda$-twisted perfect\footnote{Because $U$ is flat over $\Picbar^\lambda_X$ and $X$ is proper; see \cite[\href{https://stacks.math.columbia.edu/tag/0DJQ}{Tag 0A1E}]{stacks-project}.} complex of rank $N_E$, so $N_E\cdot\alpha_\lambda = 0$.  (See~\cite[Thm~4.6.5]{huybrechts_lehn} for a similar argument.)

Now if $\xi \subset X_\text{sm}$ is a subscheme of length $d$, then $\O_\xi$ admits a finite resolution by vector bundles and satisfies $\chi(c \cdot [\O_\xi]) = d$.  Thus a zero-cycle of degree 1 forces $\alpha_\lambda$ to vanish, by a g.c.d.\ argument.  Note that if the support of $\xi$ meets the singular locus of $X$ then $\O_\xi$ might not admit a finite resolution by vector bundles.
\end{proof}

Lemma~\ref{point_kills_alpha} will be used in conjunction with the following:

\begin{lem} \label{integral_reduction}
Let $X$ be a projective variety over a non-Archimedean local field $k = k_v$ with ring of integers $\O_v$ and residue field $\kappa_v$.  Suppose that $X$ has a model $\X \to \Spec \O_v$ whose special fiber is geometrically integral, or more generally has a component $Y$ of multiplicity 1 that is geometrically integral.  Then $X$ has a zero-cycle of degree 1 supported in its smooth locus.
\end{lem}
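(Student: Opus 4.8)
The plan is to reduce to a statement over the finite residue field and then lift to characteristic zero by Hensel's lemma. First I would isolate the good part of the special fibre. Since the geometrically integral component $Y$ occurs with multiplicity $1$, the morphism $\X \to \Spec \O_v$ is smooth at the generic point $\eta_Y$ of $Y$: the local ring $\O_{\X,\eta_Y}$ is a DVR whose maximal ideal is generated by a uniformizer $\pi$ of $\O_v$, the fibre over $\Spec \kappa_v$ is reduced there (as $Y$ is geometrically reduced), hence generically smooth over $\kappa_v$, and flatness over the DVR $\O_v$ is automatic since $\pi$ is a nonzerodivisor. The smooth locus $\X_{\mathrm{sm}}$ of $\X \to \Spec \O_v$ is open, so its intersection with $Y$, after removing the other components of the special fibre and the singular locus of $Y$, is a dense open subvariety $Y^0 \subseteq \X_{\mathrm{sm}}$ that is smooth and geometrically integral over $\kappa_v$. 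It then suffices to produce closed points of $Y^0$ whose degrees over $\kappa_v$ have greatest common divisor $1$.

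The key input is that a geometrically integral variety over a finite field $\kappa_v = \F_q$ has index $1$, i.e.\ the degrees of its closed points have $\gcd$ equal to $1$. I would prove this from the Lang--Weil estimates: writing $m$ for this $\gcd$, every closed point of $Y^0$ has degree divisible by $m$, so $Y^0(\F_{q^n})$ is empty whenever $m \nmid n$; but Lang--Weil gives $\#Y^0(\F_{q^n}) \sim q^{n \dim Y^0} > 0$ for all large $n$, and choosing such an $n$ coprime to $m$ forces $m = 1$. Concretely this yields finitely many closed points $y_1, \dots, y_r \in Y^0$ with $\gcd_i \deg y_i = 1$.

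Finally I would lift. Each $y_i$ is a smooth point of $\X \to \Spec \O_v$ with residue field $\kappa(y_i)$, the unramified extension of $\kappa_v$ of degree $\deg y_i$; by Hensel's lemma it lifts to an $\O_{w_i}$-point of $\X$, where $\O_{w_i}$ is the ring of integers of the corresponding unramified extension $k_{w_i}/k_v$. The generic fibre of this section is a $k_{w_i}$-point of $X$ landing in $X_{\mathrm{sm}}$, hence a closed point $x_i$ of the smooth locus with $\deg x_i \mid [k_{w_i} : k_v] = \deg y_i$. Then $\gcd_i \deg x_i \mid \gcd_i \deg y_i = 1$, so a suitable $\Z$-linear combination of the $x_i$ is a zero-cycle of degree $1$ supported in $X_{\mathrm{sm}}$. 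The main obstacle is the index-$1$ statement over the finite field; everything else (the multiplicity-$1$ smoothness and the Henselian lift) is routine once that is in hand.
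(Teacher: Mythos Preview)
Your proof is correct and follows essentially the same route as the paper's: Lang--Weil over the residue field to produce points of coprime degrees on the smooth part of the geometrically integral component, then Hensel's lemma to lift them to smooth points of $X$. The paper is terser---it simply says that $Y$ has a smooth point over a degree-$r$ extension of $\kappa_v$ for all $r\gg0$ (citing the Lang--Weil bounds), lifts these, and takes $r$ and $r+1$ to get $\gcd=1$---whereas you spell out more carefully why multiplicity~$1$ forces the model to be smooth along a dense open of $Y$, and you phrase the index-$1$ step as a contrapositive (if the $\gcd$ were $m>1$, then $Y^0(\F_{q^n})=\varnothing$ for $n$ coprime to $m$, contradicting Lang--Weil). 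These are cosmetic differences; the argument is the same.
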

\begin{proof}
By the Lang--Weil bounds, $Y$ has a smooth point defined over a degree-$r$ extension of $\kappa_v$ for all $r \gg 0$ \cite[Thm.~7.7.1]{poonen_book}.  By Hensel's lemma, this lifts to a smooth point of $X$ defined over a degree-$r$ extension of $k_v$, so there is a zero-cycle of degree $r$ defined over $k_v$, supported in the smooth locus of $X$.  By considering $r$ and $r+1$ we get a zero-cycle of degree 1.
\end{proof}

\section{A hyperk\"ahler counterexample} \label{hk_sec}

Recall that a K3 surface $S$ of degree 2 can be obtained as a double cover of $\P^2$ branched over a smooth sextic curve.  In \S\ref{steps} we consider two moduli spaces $X$ and $Y$ of sheaves on such a K3 surface $S$ and show that if the sextic satsifies a laundry list of conditions then the conclusions of Theorem~\ref{hk_thm} hold.  In \S\ref{the_example} we exhibit a sextic satsifying those conditions.  In \S\ref{computer_chat} we discuss computational issues that we faced in finding this sextic.

\subsection{Steps of the proof} \label{steps}

Let $f \in \Z[x,y,z]$ be a homogeneous polynomial of degree 6 that cuts out a smooth curve $B \subset \P^2_\Q$.  Let $S$ be the K3 surface over $\Q$ defined by
\[ w^2 = f(x,y,z), \]
either in weighted projective space $\W\P^3(3,1,1,1)$ or in the total space of $\O_{\P^2}(3)$.  Let $\pi\colon S \to \P^2$ be the map that forgets $w$, which is a double cover branched over $B$.

The class $h := \pi^*\O_{\P^2}(1) \in \NS(S)$ is ample
and satisfies $h^2 = 2$.
A curve in the linear system $|h|$ is the preimage of a line in $\P^2$,
and we may identify $|h|$ with the space of lines in $\P^2$.  A general member of $|h|$ is a smooth curve
of genus 2.

\begin{defprop} \label{tritangent}
A line $L \subset \P^2$ (defined over any perfect field $k$) is a \emph{tritangent line} to the sextic curve $B$ if it satisfies one of the following conditions, which are equivalent:
\begin{enumerate}
\item $L$ is tangent to $B$ at three points, in the scheme-theoretic sense;
\item $f|_L = c \cdot g^2$, for some scalar $c$ and some cubic $g \in H^0(\O_L(3))$;
\item the curve $C = \pi^{-1}(L)$ is not geometrically integral.
\end{enumerate}
\end{defprop}
\begin{proof}
It is clear that (a) $\Leftrightarrow$ (b) $\Rightarrow$ (c).  To see that (c) $\Rightarrow$ (b), suppose for simplicity that the line is given by $z=0$.  Then the curve $w^2 = f(x,y)$ in $\W\P^3(3,1,1)$ is not integral over $\bar k$, so $w^2 - f(x,y)$ factors, so $f(x,y)$ is a square in $\bar k[x,y]$.  Because $k$ is perfect, this implies that $f = c \cdot g^2$ for some $c \in k$ and some cubic $g \in k[x,y]$. 
\end{proof}

Consider the moduli spaces of semi-stable sheaves on $S$ of rank 0, first Chern class $h$, and Euler characteristic $-1$ or 0:
\begin{align*}
X &:= M_h(0,h,-1), \\
Y &:= M_h(0,h,0).
\end{align*}
Sheaves with these invariants include all line bundles of degree 0 or 1 supported on curves in the linear system $|h|$, together with some rank-1 torsion-free sheaves supported on singular curves.  The moduli spaces are projective varieties of dimension 4.  They map to $|h|$ by sending a sheaf to its support.  Thus if $\cC \to |h|$ is the
tautological family of curves in the linear system $|h|$, that is,
\[ \cC = \{ (x,C) : x \in C \} \subset S \times |h|, \]
then $X$ is a compactification of the relative Picard scheme $\Pic^0_{\cC/|h|}$, and $Y$ is a compactification of $\Pic^1_{\cC/|h|}$.

\pagebreak 
\begin{prop} \label{Y_is_smooth}
The space $X$ parametrizes only geometrically stable sheaves, hence is smooth.  The same is true of $Y$ if there are no tritangent lines to $B$ defined over $\overline\Q$.
\end{prop}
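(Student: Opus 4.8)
The plan is to prove the substantive claim --- that every semi-stable sheaf with the given invariants is in fact geometrically stable --- and then obtain smoothness from the standard deformation theory of sheaves on a K3 surface: for a geometrically stable, hence simple, sheaf $F$ the tangent space to the moduli space is $\Ext^1(F,F)$ and the obstructions lie in the trace-free part of $\Ext^2(F,F)$, which vanishes because Serre duality identifies $\Ext^2(F,F)$ with $\operatorname{Hom}(F,F)^\vee = \bar k$ via the trace (see \cite{huybrechts_lehn}). So everything reduces to ruling out strictly semi-stable sheaves. Every sheaf in $X$ or $Y$ is pure of dimension $1$ with $c_1 = h$, so Gieseker stability is governed by the reduced Hilbert polynomials, and comparing $p(F')$ with $p(F)$ amounts to comparing the slopes $\chi(F')/(c_1(F')\cdot h)$. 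I would work over $\overline\Q$ throughout, since geometric stability is what is at issue.

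First I would analyze the support curves. A sheaf in $X$ or $Y$ is supported on some $C \in |h|$, the preimage $\pi^{-1}(L)$ of a line, cut out by $w^2 = f|_L$ over $L \cong \P^1$. Since $f|_L \ne 0$, this double cover is always reduced; by Definition--Proposition~\ref{tritangent} it fails to be geometrically integral exactly when $L$ is a tritangent line, in which case $C = C_1 \cup C_2$ splits into the two sheets $w = \pm g$. Each $C_i$ is a smooth rational curve, so $C_i^2 = -2$ by adjunction, and from $h^2 = (C_1+C_2)^2 = 2$ one gets $C_1\cdot C_2 = 3$ and hence $C_i \cdot h = C_i^2 + C_1\cdot C_2 = 1$. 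Thus on a reducible support the only proper subcurve classes are $C_1, C_2$, each meeting $h$ in degree $1$, while on an integral support the only full-support subsheaves have class $[C]$ with $[C]\cdot h = 2$.

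For $X$, where $\chi = -1$, this gives a parity obstruction to strict semi-stability. The slope of any $F \in X$ is $-1/2$. A proper subsheaf $F' \subset F$ with one-dimensional support has $c_1(F') \cdot h \in \{1,2\}$: if it is $1$ then $F'$ is supported on a single component of a reducible $C$ and its slope $\chi(F')/1$ is an integer, hence $\ne -1/2$; if it is $2$ then $c_1(F') = [C]$, so $F/F'$ is torsion and $\chi(F') < \chi(F) = -1$, giving slope $< -1/2$. (A subsheaf with $c_1(F') = 0$ is torsion and is excluded by purity of $F$.) In every case $p(F') < p(F)$ strictly, so semi-stability forces stability, with no hypothesis on tritangent lines.

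For $Y$, where $\chi = 0$, the same bookkeeping shows that a component-supported subsheaf with $\chi(F') = 0$ would have slope $0/1 = 0 = p(F)$, destroying stability --- which is exactly why the tritangent hypothesis is needed. Granting it, every $C \in |h|$ is geometrically integral, and a rank-$1$ torsion-free sheaf on an integral curve has no proper full-support subsheaf except via a torsion quotient (which strictly lowers $\chi$) and no torsion subsheaf by purity, hence is automatically stable. The step I expect to be the main obstacle is pinning down the subsheaves on the reducible support curves together with the intersection numbers $C_i \cdot h = 1$; once these are fixed, the contrast between $\chi = -1$ (odd, no half-integer slope match) and $\chi = 0$ (even, a dangerous match on a component) makes both statements fall out.
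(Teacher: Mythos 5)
Your proposal is correct and follows essentially the same route as the paper: reduce smoothness to geometric stability (the paper cites Mukai's theorem where you reprove it via the vanishing of trace-free $\Ext^2$, but this is the same fact), then rule out strictly semi-stable sheaves by observing that a destabilizing subsheaf must live on one component of a reducible support curve, which exists only over a tritangent line, and that for $\chi=-1$ the half-integer reduced Hilbert polynomial $t-\tfrac12$ can never be matched by the integer-valued one of such a subsheaf. The paper gets $C_i\cdot h=1$ directly from $c_1(F)\cdot h=2$ and ampleness rather than via adjunction, but the parity argument and the contrast with $\chi=0$ are identical to yours.
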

\begin{proof}
Mukai showed that a moduli space of geometrically stable sheaves on a K3 surface is smooth \cite[Thm.~0.1]{mukai_inventiones}.

Let $F$ be a pure sheaf on $S_{\overline\Q}$ with $\operatorname{rank}(F) = 0$ and $c_1(F) = h$.  Write $c_1(F) = m_1 [C_1] + \dotsb + m_k [C_k]$, where $C_i$ are the irreducible components of the reduced support of $F$ and $m_i > 0$.  Since $c_1(F).h = h^2 = 2$ and $h$ is ample, we see that either $F$ is supported on an irreducible curve $C \in |h|$ and has generic rank 1 there, or it is supported on a reducible curve $C_1 \cup C_2$ with $C_1.h = C_2.h = 1$, and has generic rank 1 on each component.

In the first case (irreducible support), any saturated\footnote{See~\cite[Prop~.1.2.6]{huybrechts_lehn} for why it is enough to consider saturated subsheaves.} subsheaf $G \subset F$ is either 0 or all of $F$, so $F$ is necessarily stable.  In the second case, which can only occur if there is a tritangent line defined over $\overline\Q$, a saturated subsheaf $G \subset F$ might be supported on $C_1$ or $C_2$ alone, with generic rank 1 there.  In this case, the Hilbert polynomial $P_G(t)$ is $t + \chi(G)$, and the reduced Hilbert polynomial $p_G(t)$ is the same.  If $\chi(F) = -1$ then $P_F(t) = 2t - 1$, so $p_F(t) = t - \tfrac12$ so $p_G(t) \le p_F(t)$ implies $p_G(t) < p_F(t)$, so $F$ is again stable.
\end{proof}

\begin{prop}
Suppose there are no tritangent lines to $B$ defined over $\overline\Q$.  Then there is a $\Q$-linear exact equivalence $D^b(X) \cong D^b(Y)$.
\end{prop}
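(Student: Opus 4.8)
The plan is to build the equivalence from a relative Poincaré--type sheaf on the fiber product $X \times_{|h|} Y$, mimicking the curve-by-curve equivalence $D^b(\Pic^0_C) \cong D^b(\Pic^1_C)$ of Theorem~\ref{curve_thm} but now in a family over $|h| = \P^2$. The key geometric input is that $X$ and $Y$ are both smooth by Proposition~\ref{Y_is_smooth} (using the hypothesis that there are no tritangent lines over $\overline\Q$, so every fiber is a geometrically integral curve $C \in |h|$), and each is a compactification of a relative Picard scheme over the smooth locus of $|h|$. First I would construct the analogue of the divisor $D$ from \S\ref{jacobian_sec}: over the open locus where the fiber $C$ is smooth, the relative Poincaré bundle on $\Pic^0_{\cC} \times_{|h|} \Pic^1_{\cC}$ is $\O(D)$ with $D = \{(L,M) : H^1(L \otimes M) \neq 0\}$ fiberwise, realized globally as the support of the twisted sheaf $R^1\pi_{*}(\cdots)$ exactly as in equation~\eqref{rank_0_sheaf}.

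The genuinely new work is extending this kernel across the singular fibers, i.e. over the points of $|h|$ parametrizing nodal or cuspidal curves $C \in |h|$ (these occur precisely over the dual sextic of $B$, where the line is simply tangent or worse). Here the relevant compactified Jacobians contain rank-$1$ torsion-free but non-locally-free sheaves, and the naive Poincaré bundle degenerates. The plan is to invoke Arinkin's construction \cite{arinkin} of the autoduality/Fourier--Mukai kernel for compactified Jacobians of integral curves with planar singularities — which applies here since $\cC \to |h|$ is a family of reduced plane curves of arithmetic genus $2$ with at worst planar singularities — to produce a canonical kernel $\mathcal{P}$ on $X \times_{|h|} Y$ that restricts to $\O(D)$ over the smooth locus. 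As the paper indicates, this is the step used in \cite{adm}, and I would follow the simplified description promised there: describe $\mathcal{P}$ via the Deligne-style formula~\eqref{deligne} or as (a twist of) the derived pushforward of the product of the two twisted universal sheaves, now over the smooth ambient varieties $X$ and $Y$ rather than the singular fibers.

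Having produced the kernel, I would argue that the induced functor
\[
\Phi\colon D^b(X) \to D^b(Y)
\]
is an equivalence. The strategy is to reduce to the algebraically closed case by \cite[Lem.~2.12]{orlov_abelian}, exactly as in the proof of Theorem~\ref{ab_thm}, so that we may work over $\overline\Q$ and ignore the Brauer twists. Over $\overline\Q$, to check that $\Phi$ is fully faithful and has trivial orthogonal complement, I would use Arinkin's theorem (or the Bridgeland--Maciocia--type criterion for families) that the relative Fourier--Mukai transform for a family of integral curves with planar singularities is an equivalence fiber by fiber. Concretely, one checks the Bondal--Orlov / Bridgeland spanning-class conditions: the restriction of $\Phi$ to each fiber is the fiberwise Fourier--Mukai equivalence between the compactified $\Pic^0$ and $\Pic^1$, which is an equivalence for integral planar curves by \cite{arinkin}, and a relative equivalence over a base that is an equivalence on all fibers is an equivalence globally.

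The main obstacle will be the singular fibers: verifying that Arinkin's kernel genuinely extends $\O(D)$ and induces an equivalence there, rather than merely over the smooth locus, and confirming that the two Brauer twists $\alpha$ and $\beta$ coming from the universal sheaves on $X$ and $Y$ combine correctly — the kernel must be honestly untwisted on $X \times_{|h|} Y$ (or the twists must cancel) for $\Phi$ to be a $\Q$-linear functor between the untwisted derived categories $D^b(X)$ and $D^b(Y)$. I expect the cleanest route is to package everything through the smoothness of $X$ and $Y$: since both are smooth projective $4$-folds and the kernel agrees with an equivalence on a dense open subset, one shows the extension is forced and automatically an equivalence, with the twisted-sheaf bookkeeping handled as in \eqref{rank_0_sheaf}–\eqref{deligne} so that the Brauer obstructions are absorbed into the definition of the universal sheaves and do not appear in the final statement.
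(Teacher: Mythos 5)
Your proposal is correct and follows essentially the same route as the paper: a relative Poincar\'e kernel given by $\O(D)$ over the good locus, extended across the singular fibers via Arinkin's theorem for compactified Jacobians of integral planar curves, with the reduction to $\overline\Q$ by \cite[Lem.~2.12]{orlov_abelian} and a fiberwise spanning-class check on skyscraper sheaves. The only cosmetic difference is that the paper defines $D$ on the larger open set where at least one of $L$, $M$ is locally free (not just over smooth fibers) before pushing forward, which is exactly the locus on which Arinkin's line bundle $P$ lives, so the two kernels coincide.
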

\begin{proof}
To emulate the construction of \S\ref{jacobian_sec} for the family of curves $\cC \to |h|$, we might want to define a divisor
\[ \{ (L,M) : H^1(L \otimes M) \ne 0 \} \subset X \times_{|h|} Y. \]
But on a singular curve $C \in |h|$, if $L$ and $M$ both fail to be locally free at some singular point, then the underived tensor product $L \otimes M$ is the wrong thing to write, and $\operatorname{Tor}_i^C(L,M)$ may be non-zero for infinitely many $i > 0$.

So following Arinkin \cite{arinkin}, we consider the open subschemes
\[ X^\circ = \Pic^0_{\cC/|h|} \subset X \qquad \text{and} \qquad Y^\circ = \Pic^1_{\cC/|h|} \subset Y \]
and the divisor
\[ D = \{ (L,M) : H^1(L \otimes M) \ne 0 \} \subset X^\circ \times_{|h|} Y \ \cup\ X \times_{|h|} Y^\circ, \]
which avoids the issue just discussed because one of $L$ and $M$ is always locally free.  To see that $D$ is defined over $\Q$, we can describe it as the support of the analogue of \eqref{rank_0_sheaf}.  Next, consider the inclusion
\[ j\colon X^\circ \times_{|h|} Y \ \cup\ X \times_{|h|} Y^\circ \hookrightarrow X \times_{|h|} Y. \]
Our equivalence will be induced by the sheaf $j_* \O(D)$.  Presumably this coincides with $\O(\bar D)$ -- or more precisely, the dual of the ideal sheaf of $\bar D$ -- where $\bar D$ is the closure of $D$ in $X \times_{|h|} Y$, but we will not need this. \bigskip

To prove that the functor $F\colon D^b(X) \to D^b(Y)$ induced by $j_* \O(D)$ is an equivalence, we can again base change to $\bar\Q$ by \cite[Lem.~2.12]{orlov_abelian}.  Then for closed points $x_1, x_2 \in X$ we consider the natural map
\begin{equation} \label{ext_map}
\Ext^*_X(\O_{x_1}, \O_{x_2}) \longrightarrow \Ext^*_Y(F(\O_{x_1}), F(\O_{x_2})).
\end{equation}
The skyscraper sheaves $\O_{x_i}$ are a spanning class \cite[Prop.~3.17]{huybrechts_fm}, so if \eqref{ext_map} is an isomorphism for all $x_1$ and $x_2$ then $F$ is fully faithful [ibid., Prop.~1.49], and hence is an equivalence because $\omega_X$ and $\omega_Y$ are trivial [ibid., Prop.~7.6].

Let $p\colon X \to |h|$ and $q\colon Y \to |h|$ be the Abelian fibrations discussed earlier, which map a sheaf to its support.  Because our kernel $j_* \O(D)$ is supported on the fiber product $X \times_{|h|} Y$, we see that $F(\O_{x_1})$ is supported on the fiber $q^{-1}(p(x_1)) \subset Y$, and similarly with $x_2$.  Thus if $x_1$ and $x_2$ lie in different fibers of $p$, then $F(x_1)$ and $F(x_2)$ have disjoint support, so both sides of \eqref{ext_map} are zero.

If $x_1$ and $x_2$ lie in the same fiber, choose an \'etale neighborhood $U \to |h|$ of $p(x_1) = p(x_2)$ over which the family of cures $\cC \to |h|$ admits a section, and use the section to identify $Y|_U$ with $X|_U$.  Now we use Arinkin's \cite[Thm.~C]{arinkin}, which applies to $\Picbar^0$ of any integral curve with planar singularities, and indeed to any family of such curves by Arinkin's remark (2) after his Theorem C.  Our curves have planar singularities because they are contained in a smooth surface $S$, and they are integral thanks to our hypothesis on tritangent lines.  As at the end of \S\ref{jacobian_sec}, our $\O(D)$ now coincides with Arinkin's Poincar\'e line bundle $P$ up to tensoring with a line bundle pulled back from $X|_U$ on either side, so our $j_* \O(D)$ coincides with his Poincar\'e sheaf $\bar P = j_* P$ [ibid., Lem.~6.1(2)], again up to line bundles on either side.  Thus $F(\O_{x_1})$ and $F(\O_{x_2})$ argee, up to a line bundle, with the images of $\O_{x_1}$ and $\O_{x_2}$ under the functor induced by $\bar P$, which is an equivalence; thus \eqref{ext_map} is an isomorphism.
\end{proof}
\pagebreak 

\begin{prop}
The space $X$ contains a copy of $\P^2$, hence has infinitely many $\Q$-points.
\end{prop}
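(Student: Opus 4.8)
The plan is to exhibit a section of the support morphism $p\colon X \to |h|$; since $|h| \cong \P^2$, its image will be the desired copy of $\P^2$. The natural candidate is the section $s$ sending a curve $C \in |h|$ to the class of its structure sheaf $\O_C$. First I would confirm that $\O_C$ is a valid point of $X$: it is pure of rank $0$ with $c_1(\O_C) = [C] = h$, and since $C$ has arithmetic genus $2$ we have $\chi(\O_C) = 1 - g = -1$, so $\O_C$ has exactly the invariants defining $X = M_h(0,h,-1)$.

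Next I would check that $C \mapsto [\O_C]$ defines a morphism $s\colon |h| \to X$ over $\Q$. The tautological family $\cC \subset S \times |h|$ is defined over $\Q$ and is flat over $|h|$, since every fiber is a divisor in the class $h$ and hence has the constant Hilbert polynomial $2t - 1$. Pushing its structure sheaf forward to $S \times |h|$ therefore gives a $\Q$-rational flat family of sheaves on $S$ with the invariants of $X$, and the universal property of the moduli space produces the classifying morphism $s$. By construction the support of $s(C)$ is $C$, so $p \circ s = \mathrm{id}_{|h|}$; thus $s$ is a section of the separated morphism $p$, hence a closed immersion, and its image is a closed subvariety isomorphic to $|h| \cong \P^2$. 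Since this $\P^2$ is defined over $\Q$ and $\P^2(\Q)$ is infinite, $X$ acquires infinitely many $\Q$-points.

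The one point requiring care — and the main obstacle — is that $s$ must be defined on all of $|h|$, i.e.\ that $\O_C$ is semistable for \emph{every} $C \in |h|$, including the singular and (at tritangent lines) reducible members. Here I would note that each $C = \pi^{-1}(L)$ is cut out by $w^2 = f|_L$ with $f|_L \ne 0$, hence is reduced, so $\O_C$ is pure of dimension $1$ with generic rank $1$ on each component of its support. Proposition~\ref{Y_is_smooth} shows that any sheaf with these invariants is in fact stable, with no hypothesis on tritangent lines needed for $X$, so $s$ is everywhere defined and the construction goes through.
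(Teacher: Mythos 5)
Your proof is correct and follows exactly the paper's approach: the paper's entire argument is the one-sentence observation that $C \mapsto \O_C$ gives a section of $X \to |h|$. Your additional verifications (that $\chi(\O_C) = -1$, that the family is flat and $\Q$-rational, and that $\O_C$ is stable even on reducible members via the $\chi = -1$ case of Proposition~\ref{Y_is_smooth}) are all accurate and simply make explicit what the paper leaves implicit.
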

\begin{proof}
The map $X \to |h|$ that sends a sheaf to its support has a section given by mapping a curve $C \in |h|$ to the trivial line bundle $\O_C$.
\end{proof}

\begin{prop} \label{violates_hasse}
The space $Y$ has points over $\R$ and over $\Q_p$ for every prime $p$.
\end{prop}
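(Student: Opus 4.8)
The plan is to use the support fibration $q\colon Y \to |h|$ together with the fact recalled in the introduction that, for a smooth projective geometrically connected curve $C$ of genus $g$ over a local field, $\Pic^{g-1}_C$ always has a rational point, by Lichtenbaum's theorem and the remark of Poonen--Stoll. Fix $k_v = \R$ or $\Q_p$. Since the general member of $|h|$ is a smooth curve of genus $2$, here $g-1 = 1$, so it suffices to exhibit a single $k_v$-rational line $L$ for which $C = \pi^{-1}(L)$ is smooth of genus $2$: the fiber $q^{-1}([L])$ is then $\Pic^1_C$, which carries a $k_v$-point by the above, and this gives a $k_v$-point of $Y$.

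First I would identify the admissible lines. The curve $C = \pi^{-1}(L)$ is the double cover of $L \cong \P^1$ branched along $L \cap B$, so it is smooth exactly when $L$ meets $B$ transversally, i.e.\ when $[L]$ avoids the dual curve of $B$ in $|h|$; such a $C$ is a connected double cover of $\P^1$, hence smooth, geometrically connected, and of genus $2$ by Riemann--Hurwitz. Thus the good lines form a nonempty Zariski-open subset $U \subset |h|$. Because $k_v$ is a local field, the $k_v$-points of $|h|$ (a copy of $\P^2$) are dense in the $v$-adic topology, so $U(k_v)$ is a nonempty $v$-adic open set; I would pick any $L \in U(k_v)$. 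Note that this uses only transversality, so the standing hypothesis on tritangent lines is not even needed here.

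Finally, since $S$, $\pi$, the linear system $|h|$, and the fibration $q$ are all defined over $\Q$, base change commutes with forming the fiber over the $k_v$-point $[L]$, so $q^{-1}([L])$ is precisely the $k_v$-scheme $\Pic^1_C$ of the smooth genus-$2$ curve $C/k_v$; Lichtenbaum then gives $\Pic^1_C(k_v) \neq \varnothing$, and any such point, read inside $Y$, is the desired $k_v$-point. The argument carries no serious obstacle: the only step that needs care is this last descent --- ensuring that the point furnished by Lichtenbaum lies in the $k_v$-fiber of $Y$ rather than only in $Y_{\bar k_v}$ --- which is exactly why $L$ is chosen to be $k_v$-rational. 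One could instead attempt a $p$-adic good-reduction argument via Lemmas \ref{integral_reduction} and \ref{point_kills_alpha}, but that route yields only a zero-cycle of degree $1$ and would still have to be combined with Lichtenbaum to produce an actual point, so the fiberwise argument above is the cleaner one.
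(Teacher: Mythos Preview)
Your proof is correct and follows essentially the same approach as the paper: pick a smooth member $C \in |h|$ and apply Lichtenbaum's theorem to conclude that $\Pic^1_C \subset Y$ has $k_v$-points. The paper's version is terser --- it simply takes any smooth $C \in |h|$ (implicitly over $\Q$, which then works uniformly for all places) rather than choosing a separate $k_v$-rational line for each $v$, but the content is the same.
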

\begin{proof}
Let $C \in |h|$ be any smooth curve.  Then $\Pic^1_C \subset Y$ has both $\R$ and $\Q_p$-points by Lichtenbaum's result \cite{lichtenbaum} as discussed in the introduction.
\end{proof}

\begin{prop} \label{brauer_manin}
Suppose $f$ is chosen so that
\begin{enumerate}
\item $S(\R) = \varnothing$,
\item there are no tritangent lines to $B$ defined over $\overline \F_2$, and
\item for every tritangent line $L$ to $B$ defined over $\F_q$ with $q$ odd, the curve $C = \pi^{-1}(L) \subset S_{\F_q}$ consists of two reduced rational curves defined over $\F_q$ rather than $\F_{q^2}$.\footnote{That is to say, in Definition-Proposition~\ref{tritangent}(b) we can take $c=1$ and $g \ne 0$.}
\end{enumerate}
Then $Y(k) = \varnothing$ for every number field $k$ of odd degree over $\Q$.
\end{prop}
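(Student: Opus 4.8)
The plan is to use the Brauer class $\alpha \in \Br(Y)$ obstructing a universal sheaf on $S \times Y$ as a Brauer--Manin obstruction. I would first record that $\alpha$ is $2$-torsion: if $F$ is a sheaf parametrised by $Y$, so $\mathrm{rank}\,F = 0$, $c_1(F) = h$, and $\chi(F) = 0$, then Riemann--Roch on the K3 surface gives $\chi(F \otimes E) = h \cdot c_1(E)$ for any vector bundle $E$ on $S$ defined over $\Q$. Since $N_E := \chi(F\otimes E)$ satisfies $N_E \cdot \alpha = 0$ (the argument in the proof of Lemma~\ref{point_kills_alpha} applies verbatim to the moduli space $Y$), taking $E = \O_S(h)$ yields $h^2 = 2$ and hence $2\alpha = 0$. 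Thus for every place $v$ of a number field $k$ and every $y_v \in Y(k_v)$ the local invariant $\mathrm{inv}_v(\alpha(y_v))$ lies in $\tfrac12\Z/\Z$. If there were a point $y \in Y(k)$, the reciprocity law $\sum_v \mathrm{inv}_v(\alpha(y_v)) = 0$ would hold, and I will contradict it by computing the local terms.

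At each finite place $v$, lying over a rational prime $p$ with residue field $\kappa_v$, the goal is $\mathrm{inv}_v(\alpha(y_v)) = 0$ for all $y_v$, which I would obtain by showing that $\alpha$ vanishes in $\Br(Y_{k_v})$. By Lemma~\ref{point_kills_alpha} it is enough to produce a degree-$1$ zero-cycle on $S$ over $k_v$ supported in the smooth locus, and for this I would feed the integral model $\mathcal{S}\colon w^2 = f$ over $\O_v$ into Lemma~\ref{integral_reduction}. Its special fiber is the double cover $w^2 = \bar f$ of $\P^2_{\kappa_v}$. For odd $p$ this is geometrically integral unless $\bar f$ becomes a perfect square over $\overline{\kappa_v}$; in that case every line is tritangent, and condition (c) forces the square root to be defined over $\kappa_v$ rather than a quadratic extension, so that $w^2 = \bar f$ splits as two copies of $\P^2$ over $\kappa_v$ -- a multiplicity-$1$, geometrically integral component. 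For $p = 2$, where such a split would instead be a non-reduced double plane, condition (b) forbids tritangent lines over $\overline{\F}_2$ and hence prevents $\bar f$ from being a square, leaving $w^2 = \bar f$ geometrically integral. In every case Lemma~\ref{integral_reduction} applies, so $\alpha|_{Y_{k_v}} = 0$ and the invariant vanishes.

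At a real place $v$ the sign is reversed: I would show $\mathrm{inv}_v(\alpha(y_v)) = \tfrac12$ for every $y_v \in Y(\R)$. Such a $y_v$ lies over a real line $L$, with support curve $C = \pi^{-1}(L)$, and condition (a) gives $C(\R) \subseteq S(\R) = \varnothing$, so $C$ has no smooth real point and every closed point of its smooth locus has even degree. Consequently $C$ carries no rank-$1$ torsion-free sheaf of degree $1$ (equivalently $\chi = 0$) defined over $\R$; by the characterisation of $\alpha$ as the obstruction to realising a point of $Y$ by an honest sheaf on $S$ (the relation preceding Lemma~\ref{point_kills_alpha}), this says precisely that $\alpha(y_v) \neq 0$, i.e.\ $\mathrm{inv}_v(\alpha(y_v)) = \tfrac12$.

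Finally I would add up the contributions. Complex places contribute $0$ because $\Br(\C) = 0$, the finite places contribute $0$, and each of the $r_1$ real places contributes $\tfrac12$, so $\sum_v \mathrm{inv}_v(\alpha(y_v)) \equiv r_1/2 \pmod 1$. Writing $[k:\Q] = r_1 + 2r_2$, an odd degree forces $r_1$ to be odd, whence the sum equals $\tfrac12 \neq 0$, contradicting reciprocity; therefore $Y(k) = \varnothing$. I expect the main obstacle to be the real-place computation at the non-smooth fibers: showing that the index-$2$ obstruction persists for \emph{every} point of $Y(\R)$, including boundary points corresponding to sheaves that fail to be locally free, so that the invariant is genuinely equal to $\tfrac12$ on all of $Y(\R)$ and not merely on the dense locus of smooth fibers. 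A secondary point is to confirm that conditions (b) and (c) exhaust the ways in which the special fiber of $\mathcal{S}$ can fail to have a multiplicity-$1$ geometrically integral component, so that the finite-place vanishing holds at every prime.
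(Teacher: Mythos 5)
Your overall strategy (local invariant $\tfrac12$ at each real place, $0$ at each finite place, summed against the odd number of real places of $k$) is the paper's, and your real-place computation and the observation that $2\alpha=0$ are essentially sound. But the finite-place step has a genuine gap. You propose to show that $\alpha$ vanishes in $\Br(Y_{k_v})$ by producing a degree-$1$ zero-cycle on $S$ over $k_v$ and citing Lemma~\ref{point_kills_alpha}. This cannot work, for two reasons. First, the conclusion is false: $\alpha$ is transcendental (part (b) of the final proposition of \S\ref{steps} shows its image in $\Br(Y_{\bar\Q})$, hence in $\Br(Y_{k_v})$, is non-zero), so no argument can kill $\alpha$ on all of $Y_{k_v}$; only its restrictions to $k_v$-points vanish. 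Second, the mechanism fails: Lemma~\ref{point_kills_alpha} concerns $\Picbar^\lambda_X$, a moduli space of rank-one torsion-free sheaves on $X$, and its proof hinges on $\chi(c\cdot[\O_\xi])=\operatorname{length}(\xi)$, which uses that the parametrized sheaves have rank one on $X$. The sheaves parametrized by $Y$ have rank $0$ on $S$, so $\chi(c\cdot[\O_\xi])=0$ for every zero-dimensional $\xi\subset S$, and the numbers $N_E$ obtainable from sheaves on $S$ are the intersection numbers $h\cdot c_1(E)\in 2\Z$; the g.c.d.\ argument yields only $2\alpha=0$, which you already had. A rational point of $S_{k_v}$ simply does not see $\alpha$.

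The paper's route is to use the fibration $q\colon Y\to|h|$. A point $y_v\in Y(k_v)$ lies in the fiber $\Picbar^1_C$ over the $k_v$-line $L=q(y_v)$, where $C=\pi^{-1}(L)$ is geometrically integral (no tritangent lines over $\bar\Q$, by hypothesis (b)), and $\alpha|_{\Picbar^1_C}$ is exactly the obstruction class of Lemma~\ref{point_kills_alpha} for the \emph{curve} $C$. Hypotheses (b) and (c) guarantee that the reduction of $C$ modulo $v$ is either geometrically integral or a reduced union of two rational curves defined over $\kappa_v$, so Lemma~\ref{integral_reduction} produces a degree-$1$ zero-cycle in the smooth locus of $C$ (not of $S$), and Lemma~\ref{point_kills_alpha} applied to $C$ kills $\alpha|_{\Picbar^1_C}$, hence $\alpha|_{y_v}$. (Your real-place argument is a workable variant of the paper's, which instead notes that $\deg c_2(F)=1$ in $\operatorname{CH}_0(S_\R)$ by Riemann--Roch and uses $S(\R)=\varnothing$; that version disposes of the non-locally-free boundary sheaves you flagged as a worry in one stroke.)
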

\begin{proof}
Because there are no tritangent lines over $\overline\F_2$, there are no tritangent lines defined over $\overline\Q$, so $Y$ parametrizes only geometrically stable sheaves by Proposition~\ref{Y_is_smooth}.  Thus there is a Brauer class $\alpha \in \Br(Y)$ that obstructs the existence of a universal sheaf on $S \times Y$.  We will use $\alpha$ as a Brauer--Manin obstruction to the existence of $k$-points on $Y$.

First we claim that for all $y \in Y(\R)$ we have $\alpha|_y \ne 0$.  If on the contrary $\alpha|_y = 0$, then $y$ represents a sheaf $F$ on $S_\R$.  Because $\chi(F) = 0$ and $c_1(F)^2 = h^2 = 2$, the degree of $c_2(F) \in \operatorname{CH}_0(S_\R)$ is 1 by Riemann--Roch.  But a zero-cycle defined over $\R$ is a linear combination of $\R$-points and $\C$-points, and we have $S(\R) = \varnothing$, so there can be no zero-cycle of odd degree.

Now fix a number field $k$ of odd degree over $\Q$.  We claim that for all non-Archimedean places $v$ of $k$ and all $y \in Y(k_v)$ we have $\alpha|_y = 0$.  Even stronger, we will show that for all curves $C \in |h|$ defined over $k_v$, the restriction of $\alpha$ to the fiber $\Picbar^1_C \subset Y_{k_v}$ is zero.  Let $\O_v$ be the ring of integers of $k_v$ and $\kappa_v \cong \F_q$ the residue field.  Because the sextic $f$ was defined over $\Z$, we get a model of $C$ over $\Spec \O_v$.  By hypothesis, the reduction $C_{\kappa_v}$ is either geometrically integral or is a reduced union of two rational curves; in either case, Lemmas~\ref{integral_reduction} and~\ref{point_kills_alpha} imply that the restriction of $\alpha$ to $\Picbar^1_C$ is zero.

To conclude, suppose that $y \in Y(k)$.  Because $[k:\Q]$ is odd, $k$ has an odd number of real places, so we see that $\sum_v \operatorname{inv}_v(\alpha|_y)$ is an odd multiple of $1/2$, which is impossible; see for example \cite[Prop.~8.2.2]{poonen_book}.
\end{proof}





\begin{prop} Suppose that $\Pic(S_{\bar\Q}) = \Z h$.
\begin{enumerate}
\item The schemes $X$ and $Y$ are not birational, even over $\C$.

\item If the Brauer class $\alpha \in \Br(Y)$ that obstructs the existence of a universal sheaf on $S \times Y$ obstructs the Hasse principle -- for example, if the hypotheses of Proposition~\ref{brauer_manin} are satisfied -- then $\alpha$ is transcendental, that is, its image in $\Br(Y_{\bar\Q})$ is non-zero.
\end{enumerate}
\end{prop}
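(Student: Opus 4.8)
The plan is to translate both claims into lattice computations in the Mukai lattice of $S_\C$. Write $\widetilde H(S_\C,\Z) = H^0 \oplus H^2 \oplus H^4$ with the Mukai pairing $\langle (r,D,s),(r',D',s')\rangle = D \cdot D' - rs' - r's$; this is an even unimodular lattice. Since $\Pic(S_{\bar\Q}) = \Z h$ we have $\NS(S_\C) = \Z h$, so the algebraic part is the rank-$3$ lattice $\widetilde H^{1,1}(S_\C,\Z) = \Z e_0 \oplus \Z h \oplus \Z e_4$ with $e_0 = (1,0,0)$, $e_4 = (0,0,1)$, and its orthogonal complement is the transcendental lattice $T_S$, which depends only on $S$. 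The sheaves parametrized by $X$ and $Y$ have Mukai vectors $v_X = (0,h,-1)$ and $v_Y = (0,h,0)$; both are primitive with $\langle v,v\rangle = h^2 = 2$, so $X$ and $Y$ are smooth projective hyperk\"ahler fourfolds of $K3^{[2]}$-type, and by Yoshioka's theorem there are Hodge isometries $H^2(X,\Z) \cong v_X^\perp$ and $H^2(Y,\Z) \cong v_Y^\perp$ carrying the Beauville--Bogomolov form to the Mukai form.

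For part (a), I would use that birational hyperk\"ahler manifolds have Hodge-isometric second cohomology (Huybrechts), so it is enough to show $v_X^\perp$ and $v_Y^\perp$ are not Hodge-isometric. The main obstacle is that they \emph{are} isometric as abstract lattices: each is the orthogonal complement of a primitive vector of square $2$ in the unimodular lattice $\widetilde H(S_\C,\Z)$, and such complements are unique up to isometry, so the distinction must be purely Hodge-theoretic. I would argue that any Hodge isometry $\phi\colon v_X^\perp \to v_Y^\perp$ sends rational $(1,1)$-classes to rational $(1,1)$-classes, hence preserves the algebraic sublattice $v^\perp \cap \widetilde H^{1,1}(S_\C,\Z)$ and its complement $T_S$; since $T_S$ is primitively embedded and common to both sides, $\phi$ restricts to an integral isometry of the rank-$2$ algebraic lattices. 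A direct computation gives, for $v_X^\perp$, the basis $(-2,h,0),\,e_4$ with Gram matrix $\left(\begin{smallmatrix} 2 & 2 \\ 2 & 0\end{smallmatrix}\right)$ of discriminant $-4$, and for $v_Y^\perp$ the basis $e_0,\,e_4$ with Gram matrix $\left(\begin{smallmatrix} 0 & -1 \\ -1 & 0\end{smallmatrix}\right)$ of discriminant $-1$. As these discriminants differ, no such isometry exists and $X$, $Y$ are not birational over $\C$.

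For part (b), I would prove the stronger unconditional fact that the geometric class $\alpha_{\bar\Q} \in \Br(Y_{\bar\Q})$ is already non-zero. By base change $\alpha_{\bar\Q}$ is the obstruction to a universal sheaf on $S_{\bar\Q} \times Y_{\bar\Q}$. The argument of Lemma~\ref{point_kills_alpha} shows $N \cdot \alpha_{\bar\Q} = 0$ whenever $N = \langle v_Y, v(E)\rangle$ for a vector bundle $E$ on $S_{\bar\Q}$ (up to sign, $N = \chi(c \cdot [E])$ as in that lemma), that is, whenever $N$ lies in the image of $\langle v_Y, -\rangle$ on the algebraic Mukai lattice $\Z e_0 \oplus \Z h \oplus \Z e_4$; conversely, by Mukai's criterion a universal sheaf exists precisely when this pairing takes the value $1$, and in general the order of $\alpha_{\bar\Q}$ is the positive generator of its image ideal in $\Z$. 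Here $\langle v_Y, (r,bh,s)\rangle = 2b$, so that image is $2\Z$; hence $\alpha_{\bar\Q}$ has order $2$, is non-zero, and $\alpha$ is transcendental. The hypothesis $\Pic(S_{\bar\Q}) = \Z h$ with $h^2 = 2$ is exactly what forces every such pairing to be even, and the resulting order $2$ matches the invariants $\operatorname{inv}_v(\alpha|_y) \in \tfrac12\Z/\Z$ appearing in the proof of Proposition~\ref{brauer_manin}.

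I expect the only real work to be in justifying the order formula used in (b)---the converse direction, that no universal sheaf exists over $\bar\Q$, rests on Mukai's analysis rather than on the elementary bound of Lemma~\ref{point_kills_alpha}---and in (a) the careful verification that a Hodge isometry must respect the integral splitting into algebraic and transcendental parts. The conditional phrasing of the statement hints at an alternative arithmetic route for (b): if $\alpha$ were algebraic it would come from $H^1(G,\Pic(Y_{\bar\Q}))$ modulo constant classes, and one could instead show this group gives no Hasse-principle obstruction, for instance via triviality of the Galois action on $\Pic(Y_{\bar\Q})$. I would nonetheless present the direct lattice computation as the primary argument, since it is cleaner and yields the stronger unconditional conclusion.
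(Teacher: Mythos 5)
Your part~(a) is essentially the paper's argument with the external computation inlined: the paper simply cites Sawon for the discriminants $-4$ and $-1$ of the Beauville--Bogomolov form on $\Pic(X_\C)$ and $\Pic(Y_\C)$, and your two Gram matrices reproduce exactly those numbers; the reduction via Huybrechts (birational hyperk\"ahlers have Hodge-isometric $H^2$) and the observation that a Hodge isometry preserves the algebraic sublattice are fine. Part~(b) is where you genuinely diverge, and where there is a gap. The paper's proof is precisely the ``alternative arithmetic route'' you sketch in your last paragraph: Hochschild--Serre gives $\Br_1(Y)/\Br_0(Y) \cong H^1(G,\Pic(Y_{\bar\Q}))$, the identification of $\Pic(Y_{\bar\Q})$ with $(0,h,0)^\perp$ in the algebraic Mukai lattice is checked to be $G$-equivariant, and since $\Pic(S_{\bar\Q})=\Z h$ this is $\Z^2$ with trivial $G$-action, so $H^1=0$; an algebraic $\alpha$ would then be constant and could not obstruct the Hasse principle --- which is exactly why the statement is phrased conditionally. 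Your route aims at the stronger unconditional claim $\alpha_{\bar\Q}\neq 0$, but the step you lean on --- that the order of $\alpha_{\bar\Q}$ \emph{equals} the positive generator of the image of $\langle v_Y,-\rangle$ on the algebraic Mukai lattice --- is not a consequence of anything you or the paper establish. Lemma~\ref{point_kills_alpha} and Mukai's criterion each give only one direction: the order \emph{divides} that gcd, and gcd equal to $1$ \emph{suffices} for a universal sheaf. The equality is a genuine theorem (C\u{a}ld\u{a}raru's computation of the obstruction class for nonfine moduli spaces of sheaves on K3 surfaces), and without invoking it your computation $\langle v_Y,(r,bh,s)\rangle = 2b$ shows only that $2\alpha_{\bar\Q}=0$, which is consistent with $\alpha_{\bar\Q}=0$. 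If you import that theorem (noting that $\Br(Y_{\bar\Q})\to\Br(Y_\C)$ is an isomorphism by smooth proper base change, so you may work over $\C$ where it is stated), your part~(b) is correct and strictly stronger than the paper's conditional statement; as written, it is incomplete at exactly the point you yourself flagged as ``the only real work.''
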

\begin{proof}
Part (a) is due to Sawon \cite[Prop.~15]{sawon}: the discriminant of the Beauville--Bogomolov form on $\Pic(X_\C)$ is $-4$, while on $\Pic(Y_\C)$ it is $-1$.

For part (b), recall the filtration of the Brauer group
\[ \Br_0(Y) \subset \Br_1(Y) \subset \Br(Y), \]
where
\begin{align*}
\Br_0(Y) &:= \operatorname{im}(\Br(\Q) \to \Br(Y)), \text{ and} \\
\Br_1(Y) &:= \ker(\Br(Y) \to \Br(Y_{\bar\Q})).
\end{align*}
Classes in $\Br_0(Y)$ are called constant, those in $\Br_1(Y)$ are called algebraic, and the rest are called transcendental.  The Hochschild--Serre spectral sequence gives an isomorphism
\[ \Br_1(Y) / \Br_0(Y) \cong H^1(G, \Pic(Y_{\bar\Q})); \]
see for example \cite[Cor.~6.7.8 and Rmk.~6.7.10]{poonen_book}.

The usual identification of $\Pic(Y_{\bar\Q})$ with the orthogonal to the Mukai vector $(0,h,0)$ in the Mukai lattice $\Z \oplus \Pic(S_{\bar\Q}) \oplus \Z$ is valid as $G$-modules; to see this, note that all the maps appearing in \cite[Thm.~2.4(vi)]{charles} are $G$-equivariant, and see \cite[\S2]{frei} for the techniques needed to relax condition (C) of \cite[Def.~2.3]{charles}.

Because $\Pic(S_{\bar\Q}) = \Z h$, we see that $\Pic(Y_{\bar\Q}) \cong \Z^2$ with trivial $G$-action, so $H^1(G, \Pic(Y_{\bar\Q})) = 0$.  Thus if $\alpha$ were algebraic then it would be constant, but a constant class cannot obstruct the Hasse principle.
\end{proof}

\subsection{The explicit example} \label{the_example}

The sextic polynomial
\begin{multline*}
f(x,y,z) = -x^6 - x^5 z - x^4 y^2 - x^4 z^2 - x^3 y z^2 - x^2 y^2 z^2 \\
- x y^5 - x y^4 z - x z^5 - y^6 - y^3 z^3 - y^2 z^4 - y z^5 - z^6
\end{multline*}
satisfies all the hypotheses laid out in the previous section.  Magma code to verify the claims below is included in an ancillary file {\tt verify.magma}.  We also provide {\tt generate.magma} for readers who want to search for more examples, or adapt the code for their own purposes. \medskip

We find that $f(x,y,z) < 0$ for all $x,y,z \in \R^3 \setminus 0$, so $S(\R) = \varnothing$. \medskip

There are tritangent lines to $B$ defined over $\overline\F_p$ for five primes.  For $p=5$, the line $z = 4x+y$ is tritangent.  For $p=31$, the line $y = 24x + 23$ is tritangent.  For $p = 7517$, $84716037398136110308799$, and
\begin{align*}
&4424904772196959344085200612883251617292465803437757948\\
&5992572698404066491363246248977477562371729031497984350\\
&0902180031058767256453958545754450340721124283977338015\\
&3664612642260759001523868554216076825404419681,
\end{align*}
there are tritangent lines whose equations we omit.  In each case, there is a single tritangent line defined over $\F_p$, and its preimage in $S$ consists of two reduced rational curves defined over $\F_p$ rather than $\F_{p^2}$. \medskip

The curve $B$ is smooth over $\F_{31}$, hence over $\Q$.  Using the Magma routine {\tt WeilPolynomialOfDegree2K3Surface}, due to Elsenhans, we find the characteristic polynomial of Frobenius acting on $H^2_\text{\'et}(S_{\bar\F_{31}}, \Q_\ell(1))$:
\begin{multline*}
(t-1)^2(t^{20} - \tfrac{12}{31} t^{19} + \tfrac{15}{31} t^{18} - \tfrac{6}{31} t^{17} - \tfrac{3}{31} t^{16} - \tfrac{6}{31} t^{15} - \tfrac{5}{31} t^{14} \\
+ \tfrac{5}{31} t^{13} - \tfrac{13}{31} t^{12} + \tfrac{15}{31} t^{11} - \tfrac{22}{31} t^{10} + \tfrac{15}{31} t^9 - \tfrac{13}{31} t^8 + \tfrac{5}{31} t^7 \\
- \tfrac{5}{31} t^6 - \tfrac{6}{31} t^5 - \tfrac{3}{31} t^4 - \tfrac{6}{31} t^3 + \tfrac{15}{31} t^2 - \tfrac{12}{31} t + 1).
\end{multline*}
The degree-20 factor is irreducible and is not cyclotomic, so $S$ has geometric Picard rank 2 over $\F_{31}$, and hence geometric Picard rank 1 over $\Q$ by a result of Hassett and V\'arilly-Alvarado \cite[Prop.~5.3]{hva}.  (Note that this reference requires the geometric Picard group of $S_{\F_{31}}$ to have rank 2 \emph{and} be generated by the curves in the preimage of the tritangent line, but the latter is automatic: the intersection pairing between the two curves is
\[ \begin{pmatrix} -2 & 3 \\ 3 & -2 \end{pmatrix}, \]
whose discriminant $-5$ is squarefree, whereas if they generated an index-$N$ sublattice then the discriminant would be divisible by $N^2$.)

\subsection{Computational discussion} \label{computer_chat}

The computational difficulty is in finding all primes $p$ such that there is a tritangent line to the sextic curve defined over $\overline\F_p$.  For any given $p$, we can find tritangent lines defined over $\overline\F_p$ in a fraction of a second using Elsenhans and Jahnel's algorithm \cite[Alg.~8]{ej_deg2}: we write equations in the coefficients of a general line that say it is tritangent to the sextic, and compute a Gr\"obner basis of the ideal they generate.  But if we want to find tritangent lines for all primes at once, we must compute a Gr\"obner basis of the same ideal over $\Z$, which typically takes about half an hour in our implementation.  Moreover, the running time is very sensitive to the details of the implementation: if we make a seemingly trivial change, like switching the order of two variables, it might take hours or days.  In {\tt verify.magma} we carry out the Gr\"obner basis computation over $\Z$, but in our initial search for $f$ we needed something much faster.

Following advice from S.~Elsenhans, we computed a Gr\"obner basis over $\Q$ in Magma with the {\tt ReturnDenominators} option enabled; this returns a list of all the denominators used in the division steps of the Gr\"obner basis algorithm, so if there are no tritangent lines over $\overline\Q$ then the primes at which tritangent lines occur must divide one of those denominators.  In our case the list was very long, and many of the denominators were more than 1000 digits, which is too big to factor.  So we ran the computation twice, with slight variations in the details, and then took common factors between the two lists.  This yielded a list of small numbers and one 300-digit number.  This is still too big to factor in general, but we tested many sextics and occasionally found candidates for which the big number had a few small prime factors and one big prime factor, as in the example above.

We also modified \cite[Alg.~8]{ej_deg2} as follows.  The main step in the algorithm is to take the ideal in
\[ R[a,b,c_0,\dotsc,c_3], \]
where $R = \F_p$ or $\Q$ or $\Z$, generated by equating coefficients in
\[ f(1, t, a+bt) = (c_0 + c_1 t + c_2 t^2 + c_3 t^3)^2, \]
and eliminate the variables $c_0, \dotsc, c_3$ to get a Gr\"obner basis of an ideal in $R[a,b]$ that says ``the line $z = ax + by$ is a tritangent line.''  Over $\F_p$ and $\Q$, this is very efficient.  But over $\Z$ and with {\tt ReturnDenominators} over $\Q$ it was profitable to split it into two steps.  First, take the ideal in
\[ \Z[c_0,\dotsc,c_3,d_0,\dotsc,d_6] \]
generated by equating coefficients in
\[ d_0 + d_1 t + \dotsb + d_6 t^6 = (c_0 + \dotsb + c_3 t^3)^2, \]
and eliminate $c_0, \dotsc, c_3$ to give an ideal in $\Z[d_0,\dotsc,d_6]$ with 51 generators.  This takes a fraction of a second.  Then substitute the coefficients of $f(1,t,a+bt)$ into the latter ideal and compute a Gr\"obner basis.  Our modified algorithm ran at least an order of magnitude faster than the original when using {\tt ReturnDenominators} over $\Q$.  Over $\Z$, it ran in about half an hour, whereas the original ran out of memory before returning an answer.  Our modification produces a much bigger set of generators for the ideal, with elements of much higher degree,
so we were surprised that it performed better.

\newcommand \httpurl [1] {\href{https://#1}{\nolinkurl{#1}}}
\bibliographystyle{amsplain}
\bibliography{rational}

\vspace{2\baselineskip}
\scriptsize
\noindent Nicolas Addington \\
Department of Mathematics \\
University of Oregon \\
Eugene, OR 97403-1222 \\
adding@uoregon.edu \bigskip

\noindent Benjamin Antieau \\
Department of Mathematics\\
Northwestern University\\
2033 Sheridan Road\\
Evanston, IL 60208\\
antieau@northwestern.edu \bigskip

\noindent Katrina Honigs \\
Department of Mathematics \\
University of Oregon \\
Eugene, OR 97403-1222 \\
honigs@uoregon.edu \bigskip

\noindent Sarah Frei \\
Department of Mathematics \\
Rice University \\
6100 Main Street \\
Houston, TX 77005-1892 \\
sarah.frei@rice.edu

\end{document}